\documentclass[reqno,12pt]{amsart}
\usepackage{setspace,tikz,xcolor,mathrsfs,listings,multicol}
\usepackage{rotating}
\usepackage[vcentermath]{youngtab}
\usepackage{fullpage}
\usepackage{enumerate}
\usepackage{booktabs}
\usepackage{cite}
\usepackage[all,cmtip]{xy}
\usetikzlibrary{arrows,matrix}
\tikzset{tab/.style={matrix of math nodes,column sep=-.35, row sep=-.35,text height=7pt,text width=7pt,align=center,inner sep=2,font=\footnotesize}}

\usepackage{setspace}
%\doublespacing
%\onehalfspacing

\usepackage[colorlinks=true, pdfstartview=FitV, linkcolor=blue, citecolor=blue, urlcolor=blue]{hyperref}

% use these commands for typesetting doi and arXiv references in the bibliography

\newcommand{\g}{\mathfrak{g}}

\newcommand{\clfw}{\overline{\omega}} % classical fundamental weights
\newcommand{\fwA}{\eta} % fundamental weights of type A_n
 % classical simple roots
 % classical Weyl group
\newcommand{\inner}[2]{\left\langle #1, #2 \right\rangle}
 % inner product but does not scale the brackets
\newcommand{\iso}{\cong}

\newcommand{\sgn}{\operatorname{sgn}}

 % Support
\DeclareMathOperator{\wt}{wt} % weight
 % promotion
 % identity
 % character
 % grading
\DeclareMathOperator{\lev}{lev} % level
\DeclareMathOperator{\stab}{stab} % stabilizer

\newcommand{\mcC}{\mathcal{C}}

\newcommand{\ZZ}{\mathbb{Z}}

\newcommand{\bon}{\overline{1}}
\newcommand{\btw}{\overline{2}}
\newcommand{\bth}{\overline{3}}
\newcommand{\bfo}{\overline{4}}
\newcommand{\bfive}{\overline{5}}
\newcommand{\bsix}{\overline{6}}
\newcommand{\bseven}{\overline{7}}

% Dark red emphasis
\definecolor{darkred}{rgb}{0.7,0,0} % darkred color
\newcommand{\defn}[1]{{\color{darkred}\emph{#1}}} % emphasis of a definition

% More colors
\definecolor{uqgold}{RGB}{196, 158, 54} % UQ gold
\definecolor{uqpurple}{RGB}{73, 7, 94} % UQ purple

%% For typesetting code listings                                                
\usepackage{listings}
\lstdefinelanguage{Sage}[]{Python}
{morekeywords={False,sage,True},sensitive=true}
\lstset{
  frame=single,
  showtabs=False,
  showspaces=False,
  showstringspaces=False,
  commentstyle={\ttfamily\color{dgreencolor}},
  keywordstyle={\ttfamily\color{dbluecolor}\bfseries},
  stringstyle={\ttfamily\color{dgraycolor}\bfseries},
  language=Sage,
  basicstyle={\footnotesize\ttfamily},
  aboveskip=0.75em,
  belowskip=0.75em,
  xleftmargin=.15in,
}
\definecolor{dblackcolor}{rgb}{0.0,0.0,0.0}
\definecolor{dbluecolor}{rgb}{0.01,0.02,0.7}
\definecolor{dgreencolor}{rgb}{0.2,0.4,0.0}
\definecolor{dgraycolor}{rgb}{0.30,0.3,0.30}

\theoremstyle{plain}
\newtheorem{thm}{Theorem}[section]
\newtheorem{lemma}[thm]{Lemma}
\newtheorem{conj}[thm]{Conjecture}
\newtheorem{prop}[thm]{Proposition}
\newtheorem{cor}[thm]{Corollary}
\theoremstyle{definition}

\newtheorem{remark}[thm]{Remark}
\numberwithin{equation}{section}
%\numberwithin{figure}{section}
%\numberwithin{table}{section}
%\setcounter{section}{-1}

% For breaking equations across multiple pages
% \allowdisplaybreaks[1]

\usepackage[colorinlistoftodos]{todonotes}

%%%%%%%%%%%%%%%%%%%%%%%%%%%%%%%%%%%%%%%%

\begin{document}
\title[KR crystals $B^{7,s}$ for type $E_7^{(1)}$]{Kirillov--Reshetikhin crystals $B^{7,s}$ for type $E_7^{(1)}$}

\author[R.~Biswal]{Rekha Biswal}
\address[R. Biswal]{Max-Planck-Institut f\"ur Mathematik, Vivatsgasse 7, 53111 Bonn, Germany}
\curraddr{University of Edinburgh, Old College, South Bridge, Edinburgh EH8 9YL, United Kingdom}
\email{rbiswal@ed.ac.uk}
\urladdr{https://rekhabiswal.github.io/}

\author[T.~Scrimshaw]{Travis Scrimshaw}
\address[T. Scrimshaw]{School of Mathematics and Physics, The University of Queensland, St.\ Lucia, QLD 4072, Australia}
\email{tcscrims@gmail.com}
\urladdr{https://people.smp.uq.edu.au/TravisScrimshaw/}

\keywords{Kirillov--Reshetikhin crystal, crystal, crystal basis, affine Lie algebra}
\subjclass[2010]{05E10, 17B37}

\thanks{R.B.\ was partially supported by the NSERC discovery grant of her Postdoc supervisor Michael Lau at Universit\'e Laval.
T.S.\ was partially supported by the Australian Research Council grant DP170102648.}

\begin{abstract}
We construct a combinatorial crystal structure on the Kirillov--Reshetikhin crystal $B^{7,s}$ in type $E_7^{(1)}$, where $7$ is the unique node in the orbit of $0$ in the affine Dynkin diagram.
We then describe the combinatorial $R$-matrix $R \colon B^{7,s} \otimes B^{7,s'} \to B^{7,s'} \otimes B^{7,s}$.
\end{abstract}

\maketitle

%=====================================================================
\section{Introduction}
\label{sec:introduction}

An important class of finite-dimensional representations for affine Lie algebras are the \defn{Kirillov--Reshetikhin (KR) modules}, which are characterized by their Drinfel'd polynomials~\cite{CP95,CP98}. We denote a KR module by $W^{r,s}$, where $r$ is a node of the classical Dynkin diagram and $s$ is a positive integer. KR modules have been well-studied and have many interesting properties. For example, their characters (resp.~$q$-characters) are solutions of the Q-system (resp.~T-system)~\cite{Hernandez10} (see also~\cite{KNS11} and references therein). Moreover, graded (Demazure-type) characters of tensor products of single-column KR modules are (nonsymmetric) Macdonald polynomials at $t=0$ for untwisted affine types~\cite{LNSSS14,LNSSS14II,LNSSS15}.

One significant aspect of a KR module $W^{r,s}$ is that it (conjecturally) admits a crystal base~\cite{HKOTY99,HKOTT02} despite not being a highest weight module. The corresponding crystal of $W^{r,s}$ is called a \defn{Kirillov--Reshetikhin (KR) crystal} and denoted by $B^{r,s}$. KR crystals have been shown to exist in all nonexceptional types in~\cite{OS08}, types $G_2^{(1)}$ and $D_4^{(3)}$ in~\cite{Naoi18}, for a number of nodes in exceptional types~\cite{BS20,NS19}, and for $r$ being in the orbit of or adjacent to $0$ in all affine types from the general theory~\cite{KKMMNN91,KKMMNN92}. An open problem is to determine a uniform model for KR crystals. This has been achieved for $B^{r,1}$ by using Kashiwara's construction of projecting an extremal level-zero module/crystal~\cite{K02}. This was done explicitly by Naito and Sagaki using Lakshmibai--Seshadri (LS) paths~\cite{NS03,NS06,NS08II}. The construction of Kashiwara was also shown to partially extend to general $B^{r,s}$ in nonexceptional affine types (conjecturally in all affine types)~\cite{LS18}.
In contrast, the models in~\cite{FOS09,JS10,KMOY07,Yamane98} are all type-dependent, but are given for $B^{r,s}$ for all $s$.

KR crystals are connected with mathematical physics. For instance, tensor products of KR modules are used to describe certain vertex models and are related with Heisenberg spin chains by the $X=M$ conjecture of~\cite{HKOTY99,HKOTT02}.
The $X=M$ conjecture implies a fermionic formula for the graded characters of a tensor product of KR crystals; see~\cite{OSS18,Scrimshaw17} for recent progress. Furthermore, tensor products of KR crystals describe the dynamics of soliton cellular automata, a generalization of the Takehashi--Satsuma box-ball system (which is an ultradiscrete version of the Korteweg--de Vries (KdV) equation). We refer the reader to~\cite{IKT12,LS17} for more details.
Another important (conjectural) property of KR crystals is that they are perfect~\cite{FOS10,KKMMNN91,KKMMNN92,KMOY07,Yamane98}, a technical condition that allows highest weight crystals to be modeled using a semi-infinite tensor product known as the Kyoto path model~\cite{KKMMNN92}.

In this note, we give a combinatorial model for the KR crystal $B^{7,s}$ in type $E_7^{(1)}$, where $7$ is the unique node in the orbit of $0$ in the Dynkin diagram (see Figure~\ref{fig:Dynkin} below). We achieve this by considering the (Levi) decomposition of the classical (type $E_7$) highest weight crystal $B(s\clfw_7)$ into $A_6$ highest weight crystals, which is multiplicity free. From this, we reconstruct the $A_7$ decomposition of $B^{7,s}$ since $B^{7,s} \iso B(s\clfw_7)$ as $E_7$ crystals and the decomposition of $A_7$ highest weight crystals into $A_6$ crystals is multiplicity free. We note that the KR crystal $B^{7,s}$ exists since $7$ is a minuscule node, so $B^{7,s}$ is irreducible as a classical crystal.
The novelty of our approach is doing a further Levi decomposition and reconstructing the affine action to a type $A_7$ crystal rather than through the classical decomposition.
We then given an explicit description of the combinatorial $R$-matrix $R \colon B^{7,s} \otimes B^{7,s'} \to B^{7,s'} \otimes B^{7,s}$.
We note that the local energy function is given by~\cite[Thm.~7.5]{Scrimshaw17}.

As a potential application of our results, the combinatorial $R$-matrix allows us to study soliton cellular automata of $B^{7,1}$ using different techniques from~\cite{LS18}. Moreover, our results could potentially be used to show that $B^{7,s}$ is a perfect crystal of level~$s$.

This paper is organized as follows.
In Section~\ref{sec:background}, we give the necessary background.
In Section~\ref{sec:results}, we give our main results.
In Section~\ref{sec:conjectures}, we give a conjecture about the decomposition of $B^{1,s}$ into $A_7$ crystals in an effort to prove~\cite[Conj.~3.26]{JS10}.

\subsection*{Acknowledgments}

The authors thank the referee for useful comments on our manuscript.

%=====================================================================
\section{Background}
\label{sec:background}

Let $\g$ be an affine Kac--Moody Lie algebra with index set $I$, Cartan matrix $(A_{ij})_{i,j \in I}$, simple roots $(\alpha_i)_{i \in I}$, fundamental weights $(\omega_i)_{i \in I}$, and simple coroots $(\alpha_i^{\vee})_{i \in I}$.
Let $U_q(\g)$ denote the corresponding (Drinfel'd--Jimbo) quantum group, and we will be using $U_q'(\g) := U_q([\g, \g])$, which has weight lattice $P = \sum_{i \in I} \ZZ\omega_i$.
%Define $c_i^{\vee} := \max(a_i^{\vee}/a_i, 1)$, where $a_i$ and $a_i^{\vee}$ are the Kac and dual Kac labels, respectively~\cite[Table Aff1-3]{kac90}.
Let $P^+$ denote the positive weight lattice.
Let $Q$ be the root lattice with $Q^+$ being the positive root lattice.
We denote the canonical pairing $\langle\ ,\ \rangle \colon P^{\vee} \times P \to \ZZ$, which is given by $\inner{\alpha_i^{\vee}}{\alpha_j} = A_{ij}$.

Recall that $\lev(\lambda) := \langle c, \lambda \rangle$ is the level of the weight $\lambda$, where $c$ is the canonical central element of $\g$. In particular, for $\g$ of type $E_7^{(1)}$, we have
\begin{equation}
\label{eq:levels}
\begin{aligned}
\lev(\omega_0) & = 1, & \qquad
\lev(\omega_1) & = 2, & \qquad
\lev(\omega_2) & = 2, & \qquad
\lev(\omega_3) & = 3, \\
\lev(\omega_4) & = 4, &
\lev(\omega_5) & = 3, &
\lev(\omega_6) & = 2, &
\lev(\omega_7) & = 1.
\end{aligned}
\end{equation}
We denote the dominant weights of level $\ell$ by $P_\ell^+$.

Let $\g_0$ denote the canonical simple Lie algebra given by the index set $I_0 = I \setminus \{0\}$, and $U_q(\g_0)$ the corresponding quantum group.
Let $P_0$ and $Q_0$ be the weight and root lattice of $\g_0$, and let $\clfw_i$ be the natural projection of the fundamental weight $\omega_i$ onto $P_0$.
Let $W_0$ be the Weyl group of $\g_0$.

\begin{figure}
\[
\begin{tikzpicture}[scale=0.65, baseline=0]
\draw (0 cm,0) -- (12 cm,0);
\draw (6 cm, 0 cm) -- +(0,2 cm);
\draw[fill=white] (0 cm, 0 cm) circle (.25cm) node[below=5pt]{$0$};
\draw[fill=white] (2 cm, 0 cm) circle (.25cm) node[below=5pt]{$1$};
\draw[fill=white] (4 cm, 0 cm) circle (.25cm) node[below=5pt]{$3$};
\draw[fill=white] (6 cm, 0 cm) circle (.25cm) node[below=5pt]{$4$};
\draw[fill=white] (8 cm, 0 cm) circle (.25cm) node[below=5pt]{$5$};
\draw[fill=white] (10 cm, 0 cm) circle (.25cm) node[below=5pt]{$6$};
\draw[fill=white] (12 cm, 0 cm) circle (.25cm) node[below=5pt]{$7$};
\draw[fill=white] (6 cm, 2 cm) circle (.25cm) node[right=4pt]{$2$};
\end{tikzpicture}
\]
\caption{The Dynkin diagram of type $E_7^{(1)}$.}
\label{fig:Dynkin}
\end{figure}
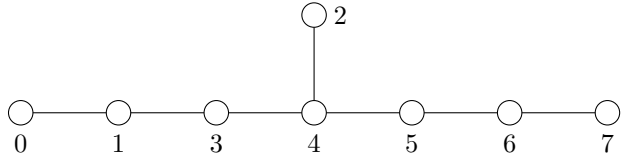

%%%%%%%%%%
\subsection{Crystals}

An \defn{abstract $U_q(\g)$-crystal} is a set $B$ endowed with \defn{crystal operators} $e_i, f_i \colon B \to B \sqcup \{0\}$, for $i \in I$, and \defn{weight function} $\wt \colon B \to P$ that satisfy the following conditions:
\begin{itemize}
\item[(1)] $\varphi_i(b) = \varepsilon_i(b) + \inner{\alpha_i^{\vee}}{\wt(b)}$, for all $b \in B$ and $i \in I$,
\item[(2)] $f_i b = b'$ if and only if $b = e_i b'$, for $b, b' \in B$ and $i \in I$,
\item[(3)] $\wt(f_i b) = \wt(b) - \alpha_i$ if $f_i b \neq 0$;
\end{itemize}
where the statistics $\varepsilon_i, \varphi_i \colon  B \to \ZZ_{\geq 0}$ are defined by
\[
\varepsilon_i(b) := \max \{ k \mid e_i^k b \neq 0 \}\,,
\qquad \qquad \varphi_i(b) := \max \{ k \mid f_i^k b \neq 0 \}\,.
\]

\begin{remark}
The definition of an abstract crystal given in this paper is sometimes called a \defn{regular} or \defn{seminormal} abstract crystal in the literature. See, \textit{e.g.},~\cite{BS17} for the more general definition.
\end{remark}

%Let $e_i^{\max} b := e_i^{\varepsilon_i(b)} b$ and $f_i^{\max} b := f_i^{\varphi_i(b)} b$.
Using the axioms, we identify $B$ with an $I$-edge colored weighted directed graph whose vertices are $B$ and having an $i$-colored edge $b \xrightarrow[\quad]{i} b'$ if and only if $f_i b = b'$.
Therefore, we can depict an entire $i$-string through an element $b \in B$ diagrammatically by
\[
e_i^{\varepsilon_i(b)}b \xrightarrow[\hspace{15pt}]{i}
\cdots \xrightarrow[\hspace{15pt}]{i}
e_i^2 b \xrightarrow[\hspace{15pt}]{i}
e_i b \xrightarrow[\hspace{15pt}]{i}
b \xrightarrow[\hspace{15pt}]{i}
f_i b \xrightarrow[\hspace{15pt}]{i}
f_i^2 b \xrightarrow[\hspace{15pt}]{i}
\cdots \xrightarrow[\hspace{15pt}]{i}
f_i^{\varphi_i(b)} b.
\]
Let $J \subseteq I$.
An element $b \in B$ is \defn{$J$-highest (resp.\ lowest) weight} if $e_i b = 0$ (resp.~$f_i b = 0$) for all $i \in J$.
When $J = I$, we simply say $b$ is \defn{highest (resp.\ lowest) weight}.

%We define the \defn{tensor product} of abstract $U_q(\g)$-crystals $B_1$ and $B_2$ as the crystal $B_2 \otimes B_1$ that is the Cartesian product $B_2 \times B_1$ with the following crystal structure:
%\begin{align*}
%e_i(b_2 \otimes b_1) & := \begin{cases}
%e_i b_2 \otimes b_1 & \text{if } \varepsilon_i(b_2) > \varphi_i(b_1), \\
%b_2 \otimes e_i b_1 & \text{if } \varepsilon_i(b_2) \leq \varphi_i(b_1),
%\end{cases}
%\\ f_i(b_2 \otimes b_1) & := \begin{cases}
%f_i b_2 \otimes b_1 & \text{if } \varepsilon_i(b_2) \geq \varphi_i(b_1), \\
%b_2 \otimes f_i b_1 & \text{if } \varepsilon_i(b_2) < \varphi_i(b_1),
%\end{cases}
%\\ \varepsilon_i(b_2 \otimes b_1) & := \max(\varepsilon_i(b_1), \varepsilon_i(b_2) - \inner{\alpha_i^{\vee}}{\wt(b_1)}),
%\\ \varphi_i(b_2 \otimes b_1) & := \max(\varphi_i(b_2), \varphi_i(b_1) + \inner{\alpha_i^{\vee}}{\wt(b_2)}),
%\\ \wt(b_2 \otimes b_1) & := \wt(b_2) + \wt(b_1).
%\end{align*}

For abstract $U_q(\g)$-crystals $B_1, B_2, \dotsc, B_L$, the action of the crystal operators on the \defn{tensor product} $B_L \otimes \cdots \otimes B_2 \otimes B_1$, which equals the Cartesian product $B_L \times \cdots \times B_2 \times B_1$ as sets, can be defined by the \defn{signature rule}.
Let $b := b_L \otimes \cdots \otimes b_2 \otimes b_1 \in B$, and for $i \in I$, we write
\[
\underbrace{-\cdots-}_{\varphi_i(b_L)}\ 
\underbrace{+\cdots+}_{\varepsilon_i(b_L)}\ 
\cdots\ 
\underbrace{-\cdots-}_{\varphi_i(b_1)}\ 
\underbrace{+\cdots+}_{\varepsilon_i(b_1)}.
\]
Then by successively deleting consecutive $+-$-pairs (in that order), we obtain a sequence
\[
\sgn_i(b) :=
\underbrace{-\cdots-}_{\varphi_i(b)}\
\underbrace{+\cdots+}_{\varepsilon_i(b)}\,,
\]
called the \defn{reduced signature}.
If there does not exist a $+$ (resp.~$-$) in $\sgn_i(b)$, then $e_i b = 0$ (resp.~$f_i b = 0)$.
Otherwise, suppose $1 \leq j_-\leq j_+ \leq L$ are such that $b_{j_-}$ contributes the rightmost $-$ in $\sgn_i(b)$ and $b_{j_+}$ contributes the leftmost $+$ in $\sgn_i(b)$. 
Then, we have
\begin{align*}
e_i b &:= b_L \otimes \cdots \otimes b_{j_++1} \otimes e_ib_{j_+} \otimes b_{j_+-1} \otimes \cdots \otimes b_1\,, \\
f_i b &:= b_L \otimes \cdots \otimes b_{j_-+1} \otimes f_ib_{j_-} \otimes b_{j_--1} \otimes \cdots \otimes b_1\,.
\end{align*}

\begin{remark}
Our tensor product convention follows~\cite{BS17}, which is opposite to that of Kashiwara~\cite{K91}.
\end{remark}

Let $B_1$ and $B_2$ be two abstract $U_q(\g)$-crystals.
A \defn{crystal morphism} $\psi \colon B_1 \to B_2$ is a map $B_1 \sqcup \{0\} \to B_2 \sqcup \{0\}$ with $\psi(0) = 0$, such that the following properties hold for all $b \in B_1$ and $i \in I$:
\begin{itemize}
\item[(1)] if $\psi(b) \in B_2$, then $\wt\bigl(\psi(b)\bigr) = \wt(b)$, $\varepsilon_i\bigl(\psi(b)\bigr) = \varepsilon_i(b)$, and $\varphi_i\bigl(\psi(b)\bigr) = \varphi_i(b)\,;$
\item[(2)] we have $\psi(e_i b) = e_i \psi(b)$ if $\psi(e_i b) \neq 0$ and $e_i \psi(b) \neq 0\,;$
\item[(3)] we have $\psi(f_i b) = f_i \psi(b)$ if $\psi(f_i b) \neq 0$ and $f_i \psi(b) \neq 0\,.$
\end{itemize}
An \defn{embedding} (resp.~\defn{isomorphism}) is a crystal morphism such that the induced map $B_1 \sqcup \{0\} \to B_2 \sqcup \{0\}$ is an embedding (resp.~bijection).
%A crystal morphism is \defn{strict} if it commutes with all crystal operators.

An abstract crystal $B$ is a \defn{$U_q(\g)$-crystal} if $B$ is the crystal basis of some $U_q(\g)$-module.
Kashiwara~\cite{K91} has shown that the irreducible highest weight module $V(\lambda)$, for $\lambda \in P^+$, admits a crystal basis denoted $B(\lambda)$.
The highest weight crystal $B(\lambda)$ is generated by a unique highest weight element $u_{\lambda}$ that satisfies $\wt(u_{\lambda}) = \lambda$.

%%%%%%%%%%
\subsection{Minuscule crystals}

We say a highest weight $U_q(\g_0)$-crystal $B(\lambda)$ is \defn{minuscule} if $W_0$ acts transitively on $B(\lambda)$.
In other words, there exists a bijection between $B(\lambda)$ and $W_0^{\lambda}$, the set of minimal length coset representatives of $W_0 / \stab_{W_0}(\lambda)$ (recall $\stab_{W_0}(\lambda) := \{ w \in W_0 \mid w \lambda = \lambda\}$ is the stabilizer of $\lambda$ and a parabolic subgroup of $W_0$).
Indeed, consider a minimal length coset representative $w \in W_0^{\lambda}$ with a reduced expression $s_{i_1} s_{i_2} \cdots s_{i_k}$, then the corresponding element is $u_{w\lambda} := f_{i_1} f_{i_2} \cdots f_{i_k} u_{\lambda}$.
We note that the element $u_{w\lambda}$ is independent of the choice of reduced expression.

For a minuscule representation $B(\lambda)$, we can characterize the elements in $B(s\lambda)$ as follows.
Recall that since $B(\lambda)$ is a highest weight crystal, it can be considered as the Hasse diagram of a poset with $u_{\lambda}$ being the smallest element.

\begin{prop}[{\cite[Prop.~7.29]{Scrimshaw17}}]
\label{prop:minuscule}
%\travis{This cited proposition will have move in the new version, so I will need to update this reference once the paper is updated.}
Let $B(\lambda)$ be a minuscule representation.
The crystal $B(s\lambda)$ is isomorphic to the set of \defn{semistandard tableaux} whose shape is a single row of length~$s$
\[
T = 
\begin{array}{|c|c|c|c|c|}
\hline
x_1 & x_2 & \cdots & x_s
\\\hline
\end{array},
\]
with $x_1 \leq x_2 \leq \cdots \leq x_s$ in $B(\lambda)$,
and the crystal structure is given by considering $T$ as the element $x_1 \otimes x_2 \otimes \cdots \otimes x_s \in B(\lambda)^{\otimes s}$.
\end{prop}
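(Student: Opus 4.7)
The plan is to induct on $s$; the base case $s=1$ is immediate since $\mathcal{T}_1 = B(\lambda)$, where I write $\mathcal{T}_s \subseteq B(\lambda)^{\otimes s}$ for the set of weakly increasing tensors. By induction I may identify $\mathcal{T}_{s-1} \iso B((s-1)\lambda)$, so $\mathcal{T}_s$ embeds in $\mathcal{T}_{s-1} \otimes B(\lambda) \iso B((s-1)\lambda) \otimes B(\lambda)$. I aim to show that $\mathcal{T}_s$ is closed under the crystal operators and that $u_\lambda^{\otimes s}$ is its unique $I_0$-highest weight element; this forces $\mathcal{T}_s$ to be the connected component of $u_\lambda^{\otimes s}$, namely $B(s\lambda)$. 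The key input from the minuscule hypothesis is that every $i$-string in $B(\lambda)$ has length at most one, so $\varepsilon_i(b) + \varphi_i(b) \leq 1$ for every $b \in B(\lambda)$ and $i \in I_0$; this follows because every weight of $B(\lambda)$ lies in $W_0\lambda$ and simple coroots pair into $\{-1, 0, 1\}$ on that orbit. Consequently, each tensor factor $x_j$ contributes at most a single signed symbol to the $i$-signature of $T$.

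The heart of the argument is the following sub-lemma, which I expect to be the main obstacle: if $x \leq y$ in the minuscule poset $B(\lambda)$ with $\varphi_i(x) = 1$ and $\varphi_i(y) = 0$, then $f_i x \leq y$. I would prove this by induction on the length of a saturated chain of covers from $x$ to $y$. In the length-one case $y = f_{i_1} x$, the only possibility is $i_1 = i$: the option $A_{i, i_1} = -1$ would require $\varphi_i(y) - \varepsilon_i(y) = 2$, contradicting $\varphi_i + \varepsilon_i \leq 1$, while $A_{i, i_1} = 0$ would force $\varphi_i(y) = 1$, against hypothesis; thus $y = f_i x$ and the conclusion is trivial. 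For longer chains with first cover $z_1 = f_{i_1} x$, the case $i_1 = i$ gives $f_i x = z_1 \leq y$ immediately, while the case $A_{i, i_1} = 0$ implies $\varphi_i(z_1) = 1$ and that $f_i, f_{i_1}$ commute on $x$; the inductive hypothesis applied to $z_1 \leq y$ then yields $f_i z_1 \leq y$, whence $f_i x \leq f_i z_1 \leq y$.

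Closure of $\mathcal{T}_s$ now reduces to a signature-rule analysis. Suppose $f_i T \neq 0$ acts at position $j^*$; the inequality $x_{j^*-1} \leq f_i x_{j^*}$ is automatic. For the inequality $f_i x_{j^*} \leq x_{j^*+1}$, since $x_{j^*}$ contributes the \emph{rightmost} unmatched $-$, the stack-based bracket matching of the signature rule forces $\varphi_i(x_{j^*+1}) = 0$: otherwise $x_{j^*+1}$'s $-$ would also be unmatched (as $x_{j^*}$'s $-$ leaves the stack empty) and would lie strictly to the right, contradicting the choice of $j^*$. The sub-lemma applied to $x_{j^*} \leq x_{j^*+1}$ then gives $f_i x_{j^*} \leq x_{j^*+1}$; the argument for $e_i$ is symmetric. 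Uniqueness of the $I_0$-highest weight element follows from the two-factor tensor formula $\varepsilon_i(T' \otimes x) = \varepsilon_i(x) + \max(0, \varepsilon_i(T') - \varphi_i(x))$: requiring this to vanish for every $i$ forces $\varepsilon_i(x) = 0$ for all $i$, so $x = u_\lambda$; the weakly increasing condition and the fact that $u_\lambda$ is the poset minimum then iterate to give $T = u_\lambda^{\otimes s}$. Closure together with unique highest weight yields $\mathcal{T}_s \iso B(s\lambda)$.
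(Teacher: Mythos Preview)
The paper does not prove this proposition; it is quoted verbatim as \cite[Prop.~7.11]{Scrimshaw17} with no argument supplied, so there is no in-paper proof to compare against.

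That said, your argument is sound. The induction on $s$, the reduction to closure of $\mathcal{T}_s$ plus uniqueness of the highest weight element, and the signature-rule analysis are all correct under the paper's tensor convention: writing $T = x_1 \otimes \cdots \otimes x_s$ and reading the $i$-signature left to right, each minuscule factor contributes at most one symbol, and your stack observation (that if the $-$ at position $j^*$ is unmatched then a $-$ at position $j^*+1$ would also be unmatched) goes through verbatim, forcing $\varphi_i(x_{j^*+1}) = 0$. The sub-lemma is the genuine content, and your case split is complete: with $\varphi_i(x)=1$ the pairing $\inner{\alpha_i^{\vee}}{\wt(x)} = 1$, so any cover $z_1 = f_{i_1} x$ with $A_{i,i_1} < 0$ would give $\inner{\alpha_i^{\vee}}{\wt(z_1)} \geq 2$, impossible for a minuscule weight; hence only $i_1 = i$ or $A_{i,i_1} = 0$ occur, and in the latter case the commutation $f_i f_{i_1} = f_{i_1} f_i$ on $x$ (valid in any $U_q(\g_0)$-crystal when the two nodes are disconnected) gives $f_i x \lessdot f_i z_1 \leq y$ by the inductive hypothesis. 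The highest-weight uniqueness via $\varepsilon_i(T' \otimes x_s) = \varepsilon_i(x_s) + \max(0,\varepsilon_i(T') - \varphi_i(x_s))$ is also correct in this convention, and forces $x_s = u_\lambda$, whence all $x_j = u_\lambda$ by minimality.

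One cosmetic point: you might remark that the length-one analysis of the sub-lemma is subsumed by the general inductive step (the case $A_{i,i_1} = -1$ is excluded for any first cover, not just when $y$ itself covers $x$), so the base case is really length zero.
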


%%%%%%%%%%
\subsection{Type \texorpdfstring{$A_n$}{An} crystals}

In this section, we consider the Lie algebra of type $A_n$, which is $\mathfrak{sl}_{n+1}$.
We denote the fundamental weights of type $A_n$ by $\{\fwA_i \mid 1 \leq i \leq n\}$.
Recall that we have a natural bijection between $P^+$ and partitions of length at most $n$ by $\fwA_i$ corresponding to a column of height $i$.
Let $B(\fwA_1)$ denote the crystal of the vector representation of $\mathfrak{sl}_{n+1}$:
\[
\begin{tikzpicture}
\node (1) at (0,0) {$\boxed{1}$};
\node (2) at (2,0) {$\boxed{2}$};
\node (3) at (4,0) {$\boxed{3}$};
\node (d) at (6,0) {$\cdots$};
\node (n) at (8,0) {$\boxed{n}$};
\node (np1) at (10,0) {$\boxed{n+1}$};
\draw[->,red] (1) -- (2) node[midway,above] {\tiny $1$};
\draw[->,blue] (2) -- (3) node[midway,above] {\tiny $2$};
\draw[->] (3) -- (d) node[midway,above] {\tiny $3$};
\draw[->,uqgold] (d) -- (n) node[midway,above] {\tiny $n-1$};
\draw[->,uqpurple] (n) -- (np1) node[midway,above] {\tiny $n$};
\end{tikzpicture}.
\]
Furthermore, the crystal $B(\fwA)$ can be described by \defn{semistandard Young tableaux (SSYT)}, written in English convention, whose entries are at most $n+1$.
The crystal structure is given by embedding a SSYT $T \in B(\fwA)$ into $B(\fwA_1)^{\otimes \lvert \fwA \rvert}$ by the reverse Far-Eastern reading word: reading bottom-to-top and left-to-right.

Next, we recall the Levi branching rule $\mathfrak{sl}_{n+1} \searrow \mathfrak{sl}_n$ given at the level of crystals.
In terms of the crystal graph, we simply remove all $n$-colored edges.
For the SSYT, this amounts to fixing all $n+1$'s that appear.
Since any $n+1$ must be the bottom entry of every column and the largest entry in a given row, we obtain the following statement (which is well-known to experts).

\begin{prop}
\label{prop:A_branching}
As $U_q(\mathfrak{sl}_n)$-crystals, we have
\[
B(\fwA) \iso \bigoplus_{\mu} B(\mu),
\]
where the sum is taken over all $\mu$ such that $\fwA / \mu$ is a horizontal strip (\textit{i.e.}\ a skew partition that does not contain a vertical domino).
\end{prop}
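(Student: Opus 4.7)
The plan is to verify the decomposition by enumerating the $U_q(\mathfrak{sl}_n)$-highest weight elements of $B(\fwA)$. After removing the $n$-colored edges, $B(\fwA)$ is a $U_q(\mathfrak{sl}_n)$-crystal (since it is the restriction of a $U_q(\mathfrak{sl}_{n+1})$-crystal basis to the Levi $U_q(\mathfrak{sl}_n)$), and hence decomposes as a direct sum of highest weight crystals $B(\mu)$, one for each $\mathfrak{sl}_n$-highest weight element, with $\mu$ equal to its $\mathfrak{sl}_n$-weight. It therefore suffices to show that the $\mathfrak{sl}_n$-highest weight elements of $B(\fwA)$ are in weight-preserving bijection with partitions $\mu$ such that $\fwA/\mu$ is a horizontal strip.

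The first step, and the main combinatorial content, is to classify the SSYT $T$ of shape $\fwA$ satisfying $e_i T = 0$ for all $1 \le i \le n-1$. I claim that every cell in row $i$ of such a $T$ contains either $i$ or $n+1$. For a contradiction, suppose some cell in row $i$ contains $k$ with $i < k \le n$, and choose such a cell with $k$ minimal. By column-strictness and the SSYT shape constraint, every entry equal to $k-1$ in $T$ lies in a row strictly less than $i$, so in the reverse Far-Eastern reading word every such $k-1$ is separated from the chosen $k$ in a controlled way. Running the cancellation procedure for the $e_{k-1}$-signature shows that the $+$ contributed by the offending $k$ cannot be paired with any $-$ from a $k-1$, so $e_{k-1} T \neq 0$, contradicting the hypothesis.

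Given the claim, the weakly-increasing-row condition places the $i$'s of row $i$ before the $n+1$'s, and hence $T$ is determined by $\mu_i := \#\{i\text{'s in row } i\}$. Column-strictness fails precisely when a cell containing $n+1$ in row $i$ sits directly above a cell of row $i+1$, and this is avoided iff $\mu_i \ge \fwA_{i+1}$. Combined with $\mu_i \le \fwA_i$, this is exactly the horizontal strip condition on $\fwA/\mu$. Since the $n+1$'s are fixed by $e_1, \ldots, e_{n-1}$ and contribute trivially to the $\mathfrak{sl}_n$-weight, the resulting highest weight element $T_\mu$ has $\mathfrak{sl}_n$-weight equal to $\mu$ (viewed as a dominant weight of $\mathfrak{sl}_n$), and distinct $\mu$ yield distinct highest weights. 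This gives the claimed multiplicity-free decomposition.

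The main obstacle is the signature-rule argument in the first step; making the cancellation precise cleanly requires an induction on $k$, so that the cells in rows $<i$ are already known to contain only entries among $\{1,\ldots,k-1,n+1\}$ and the bookkeeping of $+/-$ contributions in the reading word is tractable. An alternative route is to invoke the classical Pieri-type branching for Schur polynomials on the character level and then appeal to the fact that the $U_q(\mathfrak{sl}_n)$-decomposition of a seminormal crystal is determined by its character, but the direct tableau argument sketched above is self-contained and keeps the combinatorial structure of $B(\fwA)$ explicit for later use.
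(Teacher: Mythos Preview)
Your argument is correct and follows essentially the same route as the paper: both identify the $U_q(\mathfrak{sl}_n)$-components by the positions of the entries equal to $n+1$, which are forced to lie at the bottom of each column and hence form a horizontal strip $\fwA/\mu$. The paper states this in one sentence as a well-known fact (``fixing all $n+1$'s that appear''), whereas you spell out the classification of $\mathfrak{sl}_n$-highest weight tableaux via the signature rule; your extra detail is sound, and indeed your own observation that the $n+1$'s are inert for $e_1,\ldots,e_{n-1}$ lets you shortcut the inductive signature argument by simply deleting the $n+1$'s and invoking the standard description of the unique $\mathfrak{sl}_n$-highest weight SSYT of shape $\mu$.
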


Note that the decomposition of Proposition~\ref{prop:A_branching} is \emph{multiplicity-free}.

We recall that there exists a natural order $2$ diagram automorphism $\sigma$ on type $A_n$ crystals given by $i^{\sigma} = n + 1 - i$.
Indeed, we define an automorphism of the weight lattice, which we abuse notation and also denote by $\sigma \colon P \to P$, by $\fwA_i \mapsto \fwA_{i^{\sigma}} = \fwA_{n + 1 - i}$;
in particular, we note that $\fwA \mapsto -w_0 \fwA$.
Next, we define a map also denoted by $\sigma \colon B(\fwA) \mapsto B( -w_0 \fwA )$ given by
\[
\sigma(f_{i_1} \cdots f_{i_k} u_{\fwA}) = f_{i_1^{\sigma}} \cdots f_{i_k^{\sigma}} u_{-w_0 \fwA}.
\]
We recall from~\cite{SS2006} that $\sigma = \vee \circ \ast$, where $\vee$ denotes the contragredient dual map and $\ast$ is the the Lusztig involution (which equals the Sch\"utzenberger involution~\cite{Lenart00}).

In the sequel, we require the $A_7$ Levi subalgebra $\g_2$ of the type $E_7^{(1)}$ affine Lie algebra $\g$ given by the index set $I_2 := I \setminus \{2\}$. Therefore, the fundamental weights correspond by
\[
\fwA_1 = \omega_7, \qquad
\fwA_2 = \omega_6, \qquad
\fwA_3 = \omega_5, \qquad
\fwA_4 = \omega_4, \qquad
\fwA_5 = \omega_3, \qquad
\fwA_6 = \omega_1, \qquad
\fwA_7 = \omega_0.
\]
We let $I_{0,2} := J \setminus \{0,2\}$ be index set of the the $A_6$ Levi subalgebra of $\g_0$ (of type $E_7$).

%%%%%%%%%%%
%\subsection{Perfect crystals}
%
%Let $P_{\ell}^+ = \{ \omega \in P^+ \mid \lev(\omega) = \ell \}$ be the set of level $\ell$ dominant weights.
%Let
%\[
%\varepsilon(b) = \sum_{i \in I} \varepsilon_i(b) \omega_i,
%\qquad\qquad
%\varphi(b) = \sum_{i \in I} \varphi_i(b) \omega_i.
%\]
%
%A crystal $B$ is \defn{perfect of level $\ell$} if it satisfies the following conditions:
%\begin{enumerate}
%\item $B$ is the crystal basis of a $U_q'(\g)$-module.
%\item $B \otimes B$ is connected.
%\item There exists a $\lambda \in P_0$ such that $\wt(B) \subseteq \lambda - Q^+$ and there exists a unique element of $B$ of classical weight $\lambda$.
%\item For all $b \in B$, we have $\lev\bigl(\varepsilon(b)\bigr) \geq \ell$.
%\item For all $\omega \in P_{\ell}^+$, there exists unique elements $b_{\omega}, b^{\omega} \in B$ such that $\varepsilon(b_{\omega}) = \omega = \varphi(b^{\omega})$.
%\end{enumerate}
%
%We know (1) and (3).
%For (5), it is sufficient to show the existence of the unique element $b_{\omega}$ for all $\omega \in P_{\ell}^+$ as duality implies the existence and uniqueness of $b^{\omega}$.

%=====================================================================
\section{Results}
\label{sec:results}

In this section, we give our main results.

We follow~\cite[Fig.~3]{JS10} and label an element $b \in B(\clfw_7)$ as a word in $I_0$ and $\overline{I}_0 := \{\bon, \dotsc, \bseven\}$, where for $i \in b$ such that $i \in I_0$ (resp.~$i \in \overline{I}_0$), we have $\varphi_i(b) = 1$ (resp.~$\varepsilon_i(b) = 1$).
See Figure~\ref{fig:basic_crystal} for the crystal $B(\clfw_7)$ with this labeling convention.

\begin{figure}
\[
\begin{tikzpicture}[>=latex,line join=bevel,scale=1.5,every node/.style={scale=0.8}]
\node (7) at (0,0) {$7$};
\node (7b6) at (1,0) {$\bseven6$};
\node (6b5) at (2,0) {$\bsix5$};
\node (5b4) at (3,0) {$\bfive4$};
\node (4b23) at (4,0) {$\bfo23$};
\node (2b3) at (5,0) {$\btw3$};
\node (3b12) at (4,-1) {$\bth12$};
\node (23b14) at (5,-1) {$\btw\bth14$};
\node (1b2) at (4,-2) {$\bon2$};
\node (12b4) at (5,-2) {$\bon\btw4$};
\node (4b15) at (6,-1) {$\bfo15$};
\node (14b35) at (6,-2) {$\bon\bfo35$};
\node (5b16) at (7,-1) {$\bfive16$};
\node (6b17) at (8,-1) {$\bsix17$};
\node (7b1) at (9,-1) {$\bseven1$};
\node (3b5) at (6,-3) {$\bth5$};
\node (15b36) at (7,-2) {$\bon\bfive36$};
\node (16b37) at (8,-2) {$\bon\bsix37$};
\node (17b3) at (9,-2) {$\bon\bseven3$};
\node (35b46) at (7,-3) {$\bth\bfive46$};
\node (36b47) at (8,-3) {$\bth\bsix47$};
\node (37b4) at (9,-3) {$\bth\bseven4$};
\node (4b26) at (7,-4) {$\bfo26$};
\node (46b257) at (8,-4) {$\bfo\bsix257$};
\node (47b25) at (9,-4) {$\bfo\bseven25$};
\node (2b6) at (7,-5) {$\btw6$};
\node (26b57) at (8,-5) {$\btw\bsix57$};
\node (27b5) at (9,-5) {$\btw\bseven5$};
\node (5b27) at (0,-4) {$\bfive27$};
\node (57b26) at (1,-4) {$\bfive\bseven26$};
\node (25b47) at (0,-5) {$\btw\bfive47$};
\node (257b46) at (1,-5) {$\btw\bfive\bseven46$};
\node (6b2) at (2,-4) {$\bsix2$};
\node (26b4) at (2,-5) {$\btw\bsix4$};
\node (4b37) at (0,-6) {$\bfo37$};
\node (47b36) at (1,-6) {$\bfo\bseven36$};
\node (46b35) at (2,-6) {$\bfo\bsix35$};
\node (5b3) at (3,-6) {$\bfive3$};
\node (3b17) at (0,-7) {$\bth17$};
\node (37b16) at (1,-7) {$\bth\bseven16$};
\node (36b15) at (2,-7) {$\bth\bsix15$};
\node (35b14) at (3,-7) {$\bth\bfive14$};
\node (4b12) at (4,-7) {$\bfo12$};
\node (2b1) at (5,-7) {$\btw1$};
\node (1b7) at (0,-8) {$\bon7$};
\node (17b6) at (1,-8) {$\bon\bseven6$};
\node (16b5) at (2,-8) {$\bon\bsix5$};
\node (15b4) at (3,-8) {$\bon\bfive4$};
\node (14b23) at (4,-8) {$\bon\bfo23$};
\node (12b3) at (5,-8) {$\bon\btw3$};
\node (3b2) at (4,-9) {$\bth2$};
\node (23b4) at (5,-9) {$\btw\bth4$};
\node (4b5) at (6,-9) {$\bfo5$};
\node (5b6) at (7,-9) {$\bfive6$};
\node (6b7) at (8,-9) {$\bsix7$};
\node (7b) at (9,-9) {$\bseven$};
% 1
\draw[->,blue] (3b12) -- node[midway,left] {\tiny $1$} (1b2);
\draw[->,blue] (23b14) -- node[midway,left] {\tiny $1$} (12b4);
\draw[->,blue] (4b15) -- node[midway,left] {\tiny $1$} (14b35);
\draw[->,blue] (5b16) -- node[midway,left] {\tiny $1$} (15b36);
\draw[->,blue] (6b17) -- node[midway,left] {\tiny $1$} (16b37);
\draw[->,blue] (7b1) -- node[midway,left] {\tiny $1$} (17b3);
\draw[->,blue] (3b17) -- node[midway,left] {\tiny $1$} (1b7);
\draw[->,blue] (37b16) -- node[midway,left] {\tiny $1$} (17b6);
\draw[->,blue] (36b15) -- node[midway,left] {\tiny $1$} (16b5);
\draw[->,blue] (35b14) -- node[midway,left] {\tiny $1$} (15b4);
\draw[->,blue] (4b12) -- node[midway,left] {\tiny $1$} (14b23);
\draw[->,blue] (2b1) -- node[midway,left] {\tiny $1$} (12b3);
% 2
\draw[->,darkred] (4b23) -- node[midway,above] {\tiny $2$} (2b3);
\draw[->,darkred] (3b12) -- node[midway,above] {\tiny $2$} (23b14);
\draw[->,darkred] (1b2) -- node[midway,above] {\tiny $2$} (12b4);
\draw[->,darkred] (4b26) -- node[midway,left] {\tiny $2$} (2b6);
\draw[->,darkred] (46b257) -- node[midway,left] {\tiny $2$} (26b57);
\draw[->,darkred] (47b25) -- node[midway,left] {\tiny $2$} (27b5);
\draw[->,darkred] (5b27) -- node[midway,left] {\tiny $2$} (25b47);
\draw[->,darkred] (57b26) -- node[midway,left] {\tiny $2$} (257b46);
\draw[->,darkred] (6b2) -- node[midway,left] {\tiny $2$} (26b4);
\draw[->,darkred] (4b12) -- node[midway,above] {\tiny $2$} (2b1);
\draw[->,darkred] (14b23) -- node[midway,above] {\tiny $2$} (12b3);
\draw[->,darkred] (3b2) -- node[midway,above] {\tiny $2$} (23b4);
% 3
\draw[->,dgreencolor] (4b23) -- node[midway,left] {\tiny $3$} (3b12);
\draw[->,dgreencolor] (2b3) -- node[midway,left] {\tiny $3$} (23b14);
\draw[->,dgreencolor] (14b35) -- node[midway,left] {\tiny $3$} (3b5);
\draw[->,dgreencolor] (15b36) -- node[midway,left] {\tiny $3$} (35b46);
\draw[->,dgreencolor] (16b37) -- node[midway,left] {\tiny $3$} (36b47);
\draw[->,dgreencolor] (17b3) -- node[midway,left] {\tiny $3$} (37b4);
\draw[->,dgreencolor] (4b37) -- node[midway,left] {\tiny $3$} (3b17);
\draw[->,dgreencolor] (47b36) -- node[midway,left] {\tiny $3$} (37b16);
\draw[->,dgreencolor] (46b35) -- node[midway,left] {\tiny $3$} (36b15);
\draw[->,dgreencolor] (5b3) -- node[midway,left] {\tiny $3$} (35b14);
\draw[->,dgreencolor] (14b23) -- node[midway,left] {\tiny $3$} (3b2);
\draw[->,dgreencolor] (12b3) -- node[midway,left] {\tiny $3$} (23b4);
% 4
\draw[->,black] (5b4) -- node[midway,above] {\tiny $4$} (4b23);
\draw[->,black] (23b14) -- node[midway,above] {\tiny $4$} (4b15);
\draw[->,black] (12b4) -- node[midway,above] {\tiny $4$} (14b35);
\draw[->,black] (35b46) -- node[midway,left] {\tiny $4$} (4b26);
\draw[->,black] (36b47) -- node[midway,left] {\tiny $4$} (46b257);
\draw[->,black] (37b4) -- node[midway,left] {\tiny $4$} (47b25);
\draw[->,black] (25b47) -- node[midway,left] {\tiny $4$} (4b37);
\draw[->,black] (257b46) -- node[midway,left] {\tiny $4$} (47b36);
\draw[->,black] (26b4) -- node[midway,left] {\tiny $4$} (46b35);
\draw[->,black] (35b14) -- node[midway,above] {\tiny $4$} (4b12);
\draw[->,black] (15b4) -- node[midway,above] {\tiny $4$} (14b23);
\draw[->,black] (23b4) -- node[midway,above] {\tiny $4$} (4b5);
% 5
\draw[->,magenta] (6b5) -- node[midway,above] {\tiny $5$} (5b4);
\draw[->,magenta] (4b15) -- node[midway,above] {\tiny $5$} (5b16);
\draw[->,magenta] (14b35) -- node[midway,above] {\tiny $5$} (15b36);
\draw[->,magenta] (3b5) -- node[midway,above] {\tiny $5$} (35b46);
\draw[dashed,magenta] (46b257) -- ++(.5,-.7);
\draw[dashed,magenta] (47b25) -- ++(.5,-.7);
\draw[dashed,magenta] (26b57) -- ++(.5,-.7);
\draw[dashed,magenta] (27b5) -- ++(.5,-.7);
\draw[->,magenta] (5b27) ++ (-0.5,0.7) -- node[midway,left] {\tiny $5$} (5b27);
\draw[->,magenta] (57b26) ++ (-0.5,0.7) -- node[midway,left] {\tiny $5$} (57b26);
\draw[->,magenta] (25b47) ++ (-0.5,0.7) -- node[midway,left] {\tiny $5$} (25b47);
\draw[->,magenta] (257b46) ++ (-0.5,0.7) -- node[midway,left] {\tiny $5$} (257b46);
\draw[->,magenta] (46b35) -- node[midway,above] {\tiny $5$} (5b3);
\draw[->,magenta] (36b15) -- node[midway,above] {\tiny $5$} (35b14);
\draw[->,magenta] (16b5) -- node[midway,above] {\tiny $5$} (15b4);
\draw[->,magenta] (4b5) -- node[midway,above] {\tiny $5$} (5b6);
% 6
\draw[->,orange] (7b6) -- node[midway,above] {\tiny $6$} (6b5);
\draw[->,orange] (5b16) -- node[midway,above] {\tiny $6$} (6b17);
\draw[->,orange] (15b36) -- node[midway,above] {\tiny $6$} (16b37);
\draw[->,orange] (35b46) -- node[midway,above] {\tiny $6$} (36b47);
\draw[->,orange] (4b26) -- node[midway,above] {\tiny $6$} (46b257);
\draw[->,orange] (2b6) -- node[midway,above] {\tiny $6$} (26b57);
\draw[->,orange] (57b26) -- node[midway,above] {\tiny $6$} (6b2);
\draw[->,orange] (257b46) -- node[midway,above] {\tiny $6$} (26b4);
\draw[->,orange] (47b36) -- node[midway,above] {\tiny $6$} (46b35);
\draw[->,orange] (37b16) -- node[midway,above] {\tiny $6$} (36b15);
\draw[->,orange] (17b6) -- node[midway,above] {\tiny $6$} (16b5);
\draw[->,orange] (5b6) -- node[midway,above] {\tiny $6$} (6b7);
% 7
\draw[->,brown] (7) -- node[midway,above] {\tiny $7$} (7b6);
\draw[->,brown] (6b17) -- node[midway,above] {\tiny $7$} (7b1);
\draw[->,brown] (16b37) -- node[midway,above] {\tiny $7$} (17b3);
\draw[->,brown] (36b47) -- node[midway,above] {\tiny $7$} (37b4);
\draw[->,brown] (46b257) -- node[midway,above] {\tiny $7$} (47b25);
\draw[->,brown] (26b57) -- node[midway,above] {\tiny $7$} (27b5);
\draw[->,brown] (5b27) -- node[midway,above] {\tiny $7$} (57b26);
\draw[->,brown] (25b47) -- node[midway,above] {\tiny $7$} (257b46);
\draw[->,brown] (4b37) -- node[midway,above] {\tiny $7$} (47b36);
\draw[->,brown] (3b17) -- node[midway,above] {\tiny $7$} (37b16);
\draw[->,brown] (1b7) -- node[midway,above] {\tiny $7$} (17b6);
\draw[->,brown] (6b7) -- node[midway,above] {\tiny $7$} (7b);
\end{tikzpicture}
\]
\caption{The crystal $B(\clfw_7)$ in type $E_7$.}
\label{fig:basic_crystal}
\end{figure}

%%%%%%%%%%
\subsection{Multiplicity freeness of \texorpdfstring{$B(s\clfw_7)$}{B(s omega7)}}

We first prove that when we decompose the $E_7$ crystals $B(s\clfw_7)$ into the Levi subalgebra of type $A_6$, we obtain a multiplicity free decomposition.

The $I_{0,2}$-highest weight elements of $B(s\clfw_7)$ are single row tableaux of size $k$ whose entries consist of
\[
x_1 = 7, \qquad
x_2 = \bsix5, \qquad
x_3 = \bfo23, \qquad
\begin{aligned}
x_4 & = \bon2, \\[2pt]
x_{4'} & = \btw3,
\end{aligned} \qquad
x_5 = \bon\btw4 \qquad
x_6 = \btw6, \qquad
x_7 = \btw1.
\]
This can be seen by a direct computation using Proposition~\ref{prop:minuscule}, the crystal graph $B(\clfw_7)$, and the signature rule.
Note that in the crystal graph of $B(\clfw_7)$, we have
\begin{equation}
\label{eq:comp_graph_B7}
\begin{tikzpicture}[baseline=0]
\node (1) at (0,0) {$x_1$};
\node (2) at (2,0) {$x_2$};
\node (3) at (4,0) {$x_3$};
\node (4) at (6,1) {$x_4$};
\node (4p) at (6,-1) {$x_{4'}$};
\node (5) at (8,0) {$x_5$};
\node (6) at (10,0) {$x_6$};
\node (7) at (12,0) {$x_7$};
\draw[<-] (1) -- (2);
\draw[<-] (2) -- (3);
\draw[<-] (3) -- (4);
\draw[<-] (3) -- (4p);
\draw[<-] (4) -- (5);
\draw[<-] (4p) -- (5);
\draw[<-] (5) -- (6);
\draw[<-] (6) -- (7);
\end{tikzpicture}
\end{equation}
and so, nearly all of the elements $x_i$ are comparable except $x_4$ with $x_{4'}$.

We recall from~\cite[Def.~3.10]{JS10} that a \defn{(reduced) composition graph} $G_k(\clfw)$ is essentially the smallest acyclic digraph with loops whose vertices are elements of $B(\clfw)$ such that for any $s > 0$ and every $(I_0 \setminus \{k\})$-highest weight element $b_1 \otimes \cdots \otimes b_s \in B(s\clfw) \subseteq B(\clfw)^{\otimes s}$, the elements $(b_1, \dotsc, b_s)$ occurs as a subsequence of a directed path in $G_k(\clfw)$.
We remark by reversing the arrows and adding loops to every vertex in~\eqref{eq:comp_graph_B7}, we obtain the composition graph $G_1(\clfw_7)$.

Note that for a semistandard tableau $T \in B(s\clfw_7)$, we have
\[
\wt(T) = \sum_{i \in I} (a_i - a_{\overline{\imath}}) \clfw_i,
\]
where $a_i$ (resp.~$a_{\overline{\imath}}$) equals the number if $i$'s (resp.~$\overline{\imath}$'s) that appear in $T$. Therefore, from~\eqref{eq:comp_graph_B7} and the signature rule, we have the following.

\begin{lemma}
\label{lemma:J0_highest_weight}
Let $m_i$ denote the number of occurrences of $x_i$ in $T \in B(s\clfw_7)$. Then $T$ is a $I_{0,2}$-highest weight element if and only if entries in $T$ consist of $\{x_1, x_2, x_3, x_4, x_{4'}, x_5, x_6, x_7\}$ and
\[
m_2 \leq m_6, \qquad\qquad
m_3 \leq m_5, \qquad\qquad
m_4 + m_5 \leq m_7, \qquad\qquad
\min(m_4, m_{4'}) = 0,
\]
with $\sum_i m_i = s$.
Moreover, if $T \in B(s\clfw_7)$ is a $I_{0,2}$-highest weight element, then
\begin{equation}
\label{eq:wt_hw02elt}
\begin{aligned}
\wt(T) & = (m_7 - m_4 - m_5)\clfw_1 + (m_3 + m_4 - m_{4'} - m_5 - m_6 - m_7)\clfw_2 + (m_3+m_{4'})\clfw_3
\\ & \hspace{20pt} + (m_5 - m_3)\clfw_4 + m_2 \clfw_5 + (m_6 - m_2)\clfw_6 + m_1 \clfw_7.
\end{aligned}
\end{equation}
\end{lemma}

We note that the condition $\min(m_4, m_{4'}) = 0$ is precisely the fact that $x_4$ and $x_{4'}$ cannot simultaneously appear in an element of $B(s\clfw_7)$.

\begin{prop}
\label{prop:B1s_A6_decomp}
The decomposition of $B(s\clfw_7)$ into type $A_6$ crystals is given by
\[
B(s\clfw_7) \iso \bigoplus_{\mu} B(\mu),
\]
where
\[
\mu = (m_7 - m_4 - m_5)\fwA_6 + (m_3+m_{4'})\fwA_5 + (m_5 - m_3)\fwA_4 + m_2 \fwA_3 + (m_6 - m_2)\fwA_2 + m_1 \fwA_1
\]
such that $m_1, \dotsc, m_7$ satisfy
\begin{gather*}
m_2 \leq m_6, \qquad\qquad
m_3 \leq m_5, \qquad\qquad
m_4 + m_5 \leq m_7, \\
\min(m_4, m_{4'}) = 0, \qquad\qquad
s = m_1 + m_2 + m_3 + m_4 + m_{4'} + m_5 + m_6 + m_7.
\end{gather*}
Moreover, this decomposition is multiplicity free.
\end{prop}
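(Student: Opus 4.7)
The plan is to extract the $A_6$ Levi decomposition directly from the classification of $I_{0,2}$-highest weight elements already provided by Lemma~\ref{lemma:J0_highest_weight}. Since $B(s\clfw_7)$ is a $U_q(\g_0)$-crystal, its restriction to the $A_6$ Levi subalgebra indexed by $I_{0,2}$ is completely reducible, and the irreducible summands are indexed by the $I_{0,2}$-highest weight elements: each such $T$ generates a subcrystal isomorphic to $B(\mu(T))$, where $\mu(T)$ is the projection of $\wt(T)$ onto the $A_6$ weight lattice (i.e., the piece obtained by dropping the $\clfw_2$ coefficient). Under the identification $\fwA_i = \clfw_{8-i}$ for $i = 1, \dotsc, 6$ inherited from the $A_7$ labeling of Section~\ref{sec:background}, the weight formula of Lemma~\ref{lemma:J0_highest_weight} translates term-by-term into the formula for $\mu$ stated in the proposition, establishing the asserted decomposition.

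It then remains to prove multiplicity freeness, that is, injectivity of the map $T \mapsto \mu(T)$. For this I would invert the formula explicitly. Writing $\mu = \sum_{i=1}^{6} c_i \fwA_i$, the coefficients of $\fwA_1$, $\fwA_3$, and $\fwA_2$ immediately yield $m_1 = c_1$, $m_2 = c_3$, and $m_6 = c_2 + c_3$, while the remaining coefficients give the relations $m_5 = m_3 + c_4$, $m_{4'} = c_5 - m_3$, and $m_7 = c_6 + m_4 + m_5$. Substituting into the size constraint $\sum_i m_i = s$ produces
\[
2(m_3 + m_4) = s - c_1 - c_2 - 2c_3 - 2c_4 - c_5 - c_6,
\]
so $N := m_3 + m_4$ is determined by $\mu$ and $s$.

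The last ingredient is the disjointness constraint $\min(m_4, m_{4'}) = 0$ from Lemma~\ref{lemma:J0_highest_weight}. Together with the nonnegativity of $m_4 = N - m_3$ and $m_{4'} = c_5 - m_3$, this forces $m_3 \in \{N, c_5\}$ and $m_3 \leq \min(N, c_5)$, hence $m_3 = \min(N, c_5)$; when $N = c_5$ the two cases collapse into a single tuple with $m_4 = m_{4'} = 0$, so no overcounting occurs at the boundary. With $m_3$ pinned down, the remaining entries follow from the displayed relations, completing the proof.

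The main obstacle I anticipate is precisely the incomparability of $x_4$ and $x_{4'}$ in the composition graph~\eqref{eq:comp_graph_B7}: without the constraint $\min(m_4, m_{4'}) = 0$ supplied by the signature rule in Lemma~\ref{lemma:J0_highest_weight}, a given $\mu$ would be realized by an entire one-parameter family of tuples and multiplicity freeness would fail. The disjointness constraint is exactly what reduces this family to a single point, and the $N = c_5$ boundary case must be handled carefully to verify that the two otherwise distinct cases collapse consistently.
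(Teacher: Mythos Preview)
Your argument is correct and follows essentially the same approach as the paper: both derive the decomposition from Lemma~\ref{lemma:J0_highest_weight} by relabeling, then prove multiplicity freeness by inverting the linear system and using the constraint $\min(m_4,m_{4'})=0$ together with $\sum_i m_i = s$ to pin down the remaining free parameter. The only cosmetic difference is that the paper solves for $m_4 - m_{4'}$ directly (which the min constraint then resolves uniquely), whereas you solve for $N = m_3 + m_4$ and recover $m_3 = \min(N,c_5)$; these are equivalent parameterizations of the same one-dimensional freedom.
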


\begin{proof}
The first claim follows immediately from Lemma~\ref{lemma:J0_highest_weight} and relabeling the fundamental weights.
For the second claim, consider a weight $\mu = \sum_{i=1}^6 a_i \fwA_i$ such that $B(\mu)$ appears in the $A_6$ decomposition of $B(s\clfw_7)$. Thus, we have
\begin{align*}
m_1 & = a_1,
&
m_2 & = a_3,
\\
m_6 & = a_2 + m_2 = a_2 + a_3,
&
m_3 & = a_5 - m_{4'},
\\
m_5 & = a_4 + m_3 = a_4 + a_5 - m_{4'},
&
m_7 & = a_6 + m_4 + m_5 = a_4 + a_5 + a_6 + m_4 - m_{4'}.
\end{align*}
Since $\min(m_4, m_{4'}) = 0$ with $m_4,m_{4'} \geq 0$, there exists a unique $m_4$ and $m_{4'}$ such that $m_4 - m_{4'} = C$ for any constant $C$.
Next, we have
\begin{align*}
s & = m_1 + m_2 + m_3 + m_4 + m_{4'} + m_5 + m_6 + m_7
\\ & = a_1 + a_3 + (a_5 - m_{4'}) + m_4 + m_{4'} + (a_4 + a_5 - m_{4'}) + (a_2 + a_3) + (a_4 + a_5 + a_6 + m_4 - m_{4'})
\\ & = a_1  + a_2 + 2a_3 + 2a_4 + 3a_5 + a_6 + 2(m_4 - m_{4'}).
\end{align*}
Hence, we have
\[
m_4 - m_{4'} = \frac{1}{2}(s - a_1 - a_2 - 2a_3 - 2a_4 - 3a_5 - a_6).
\]
Therefore, there is a unique $m_1, \dotsc, m_7$ that yields the weight $\mu$.
\end{proof}

%%%%%%%%%%
\subsection{Reconstructing the \texorpdfstring{$A_7$}{A7} crystals}

In this section, we continue to use the notation of Proposition~\ref{prop:B1s_A6_decomp}.

\begin{lemma}
Let $T \in B^{7,s}$ be a $I_{0,2}$-highest weight element. Then
\begin{align*}
-\langle \alpha_0^{\vee}, \wt(T) \rangle =
% (m_7 - m_4 - m_5)2 + (m_3 + m_4 - m_{4'} - m_5 - m_6 - m_7)2 + (m_3+m_{4'})3 + (m_5 - m_3)4 + m_2 3 + (m_6 - m_2) 2 + m_1 1.
m_1 + m_2 + m_3 + m_{4'}
\end{align*}
\end{lemma}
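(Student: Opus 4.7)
The plan is a direct computation based on the weight formula from Lemma~\ref{lemma:J0_highest_weight}. The only non-routine ingredient is the value of $\langle \alpha_0^{\vee}, \clfw_i \rangle$ for $i \in I_0$, since $\alpha_0^{\vee}$ is an affine coroot while $\clfw_i \in P_0$ is a classical weight.

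To pin this down, I would use the relation $\omega_i = \clfw_i + \lev(\omega_i)\,\omega_0$ in the affine weight lattice, which is valid because $\lev(\omega_0) = 1$ and $\clfw_i$ is the projection of $\omega_i$ onto $P_0$ along the $\omega_0$ direction. Pairing with $\alpha_0^{\vee}$ and using the standard identity $\langle \alpha_0^{\vee}, \omega_j \rangle = \delta_{0j}$ then yields
\[
\langle \alpha_0^{\vee}, \clfw_i \rangle \;=\; -\lev(\omega_i) \qquad \text{for all } i \in I_0,
\]
and the explicit levels are tabulated in~\eqref{eq:levels}.

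The remaining step is to substitute the formula for $\wt(T)$ from Lemma~\ref{lemma:J0_highest_weight} into $\langle \alpha_0^{\vee}, \wt(T) \rangle$ and collect terms, using $-\lev(\omega_1) = -\lev(\omega_2) = -\lev(\omega_6) = -2$, $-\lev(\omega_3) = -\lev(\omega_5) = -3$, $-\lev(\omega_4) = -4$, and $-\lev(\omega_7) = -1$. A routine bookkeeping check shows that the coefficients of $m_4$, $m_5$, $m_6$, and $m_7$ each cancel to zero, while the coefficients of $m_1$, $m_2$, $m_3$, and $m_{4'}$ each simplify to $-1$. Negating gives the desired identity. The main obstacle is simply the arithmetic; there is no conceptual difficulty once the identification $\langle \alpha_0^{\vee}, \clfw_i \rangle = -\lev(\omega_i)$ is established.
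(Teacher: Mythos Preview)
Your argument is correct and is essentially the paper's proof spelled out in detail: the paper's one-line proof invokes the level-$0$ condition on $B^{7,s}$ together with the levels in~\eqref{eq:levels}, which is exactly the content of your key identity $\langle \alpha_0^{\vee}, \clfw_i \rangle = -\lev(\omega_i)$ (equivalently, $\clfw_i = \omega_i - \lev(\omega_i)\,\omega_0$ is the level-$0$ lift of $\omega_i$). The remaining arithmetic using the weight formula from Lemma~\ref{lemma:J0_highest_weight} is the same in both cases.
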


\begin{proof}
This follows from Equation~\eqref{eq:wt_hw02elt}, that all elements in $B^{7,s}$ are of level $0$, and~\eqref{eq:levels}.
\end{proof}

Since $\langle \alpha_0^{\vee}, \wt(T) \rangle \leq 0$ for all $I_{0,2}$-highest weight elements $T \in B^{7,s}$, we know that any potential $I_2$-highest weight element must have $m_1 = m_2 = m_3 = m_{4'} = 0$.
Hence, the possible $I_2$-dominant weights are
\[
\mu = (m_7 - m_4 - m_5)\fwA_6 + m_5 \fwA_4 + m_6 \fwA_2
\]
(\textit{i.e.}, $\langle \alpha_i^{\vee}, \mu \rangle \geq 0$ for all $i \in I_2$) such that
\[
m_4 + m_5 \leq m_7,\qquad\qquad
m_4 + m_5 + m_6 + m_7 = s.
\]
Because we can only remove horizontal strips for the branching rule from $A_7 \searrow A_6$ (Proposition~\ref{prop:A_branching}), each of such $I_2$-dominant weights $\mu$ must correspond to a $I_2$-highest weight component $B(\mu)$.
Hence, we obtain the following.

\begin{prop}
\label{prop:B1s_A7_decomp}
The decomposition of $B(s\clfw_7)$ into type $A_7$ crystals is given by
\[
B(s\clfw_7) \iso \bigoplus_{\mu} B(\mu),
\]
where
\[
\mu = (m_7 - m_4 - m_5) \fwA_6 + m_5 \fwA_4 + m_6 \fwA_2
\]
such that $m_4,m_5,m_6,m_7$ satisfy
\[
m_4 + m_5 \leq m_7,
\qquad\qquad
s = m_4 + m_5 + m_6 + m_7.
\]
\end{prop}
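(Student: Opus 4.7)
The plan is to combine the preceding lemma with the multiplicity-free branching $A_7 \searrow A_6$ (Proposition~\ref{prop:A_branching}) to lift the $A_6$ decomposition of Proposition~\ref{prop:B1s_A6_decomp} to the desired $A_7$ decomposition. The necessary direction is essentially already in the paragraph immediately preceding the statement: any $I_2$-highest weight $T$ satisfies $\varepsilon_0(T) = 0$, so $\langle \alpha_0^{\vee}, \wt(T) \rangle = \varphi_0(T) \geq 0$, which combined with the preceding lemma forces $m_1 = m_2 = m_3 = m_{4'} = 0$. Plugging these zeros into Proposition~\ref{prop:B1s_A6_decomp} produces exactly the claimed form for $\mu$ together with the two displayed constraints, and the resulting $\mu$ is manifestly $A_7$-dominant (non-negative coefficients on $\fwA_2, \fwA_4, \fwA_6$).

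For the converse, I would pick a candidate $\mu$ from the list and let $T_\mu$ be the unique $I_{0,2}$-highest weight element of $B(s\clfw_7)$ of $A_6$-weight $\mu$, whose existence is guaranteed by Proposition~\ref{prop:B1s_A6_decomp}. Then $T_\mu$ sits inside some $A_7$-component $B(\lambda')$ of $B(s\clfw_7)$; applying the necessary direction to the $A_7$-highest weight element of that component shows $\lambda'$ is also on the candidate list, and Proposition~\ref{prop:A_branching} forces $\lambda'/\mu$ to be a horizontal strip.

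The main (and essentially only) obstacle is then to verify that among the candidate weights, horizontal-strip containment is trivial: if both $\mu = a\fwA_6 + b\fwA_4 + c\fwA_2$ and $\lambda' = a'\fwA_6 + b'\fwA_4 + c'\fwA_2$ lie on the list with $\lambda'/\mu$ a horizontal strip, then $\mu = \lambda'$. Writing them as partitions $\mu = (a+b+c, a+b+c, a+b, a+b, a, a)$ and similarly for $\lambda'$, the repeated-rows pattern $\mu_{2i-1} = \mu_{2i}$ (and the same for $\lambda'$) causes the horizontal strip inequalities $\mu_i \geq \lambda'_{i+1}$ combined with $\lambda' \supseteq \mu$ to cascade row-by-row and pin down $a'+b'+c' = a+b+c$, then $a'+b' = a+b$, then $a' = a$. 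This identifies $T_\mu$ as the $A_7$-highest weight element of its component $B(\mu)$, so $B(\mu)$ appears in the $A_7$ decomposition, with multiplicity one inherited from the multiplicity-freeness of Proposition~\ref{prop:B1s_A6_decomp}.
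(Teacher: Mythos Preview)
Your proof is correct and follows the same route as the paper: restrict to $I_{0,2}$-highest weight elements, use the lemma to force $m_1=m_2=m_3=m_{4'}=0$ for any $I_2$-highest weight element, and then invoke the horizontal-strip branching rule $A_7 \searrow A_6$ to see that every candidate $\mu$ is actually an $A_7$-highest weight. The paper condenses the last step into a single sentence (``because we can only remove horizontal strips\ldots each such $\mu$ must correspond to a component''), whereas you make explicit the cascading argument on the repeated-row partitions $(a{+}b{+}c,\,a{+}b{+}c,\,a{+}b,\,a{+}b,\,a,\,a)$ that forces $\lambda'=\mu$; this is exactly the content the paper is leaving to the reader.
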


Moreover, Proposition~\ref{prop:A_branching} states that for any element $T$ expressed as an $A_6$ tableau, we add in a horizontal strip to $T$ with every entry an $8$ such that every column has even height to obtain the representation as an $A_7$ tableau.
In particular, for a $I_{0,2}$-highest weight element $b$, we have
\[
\varepsilon_0(b) = m_1 + m_2 + m_3 + m_{4'},
\qquad\qquad
\varphi_0(b) = 0.
\]

Now we combine this to form a combinatorial crystal structure on $B^{7,s}$ by extending the $E_7$ crystal structure on $B(s\clfw_7)$ as follows. Let $\overline{\psi} \colon B(s\clfw_7) \to \bigoplus_{\overline{\mu}} B(\overline{\mu})$ be the $I_{0,2}$-crystal (\textit{i.e.}, type $A_6$) isomorphism given by Proposition~\ref{prop:B1s_A6_decomp}. From Proposition~\ref{prop:B1s_A7_decomp} and Proposition~\ref{prop:A_branching}, we can uniquely extend the image of $\overline{\psi}$ to highest weight crystals of type $A_7$. Therefore, we define
\[
e_0 := \overline{\psi}^{-1} \circ e^A_7 \circ \overline{\psi},
\qquad\qquad\qquad
f_0 := \overline{\psi}^{-1} \circ f^A_7 \circ \overline{\psi},
\]
where $e_7^A$ and $f_7^A$ are the crystal operators from this extended type $A_7$ crystal. Let $\mathcal{B}^{7,s}$ denote the corresponding crystal.

In order to show this is the combinatorial structure of KR crystal, we need the following uniqueness theorem. The proof is similar to~\cite[Thm.~3.15]{JS10} with $K = I_{0,2}$ and using Proposition~\ref{prop:B1s_A6_decomp} instead of~\cite[Lemma~3.12]{JS10}.

\begin{thm}
Let $\mathcal{B}$ and $\mathcal{B}'$ be two affine type $E_7^{(1)}$ crystals such that there exists a $I_2$-crystal (\textit{i.e.}, type $A_7$) isomorphism and $I_0$-crystal (\textit{i.e.}, type $E_7$) isomorphism
\[
\Psi_{I_2} \colon \mathcal{B}|_{I_2} \to \mathcal{B}'|_{I_2} \iso \bigoplus_{\mu} B(\mu)
\qquad\qquad
\Psi_{I_0} \colon \mathcal{B}|_{I_0} \to \mathcal{B}'|_{I_0} \iso B(s\clfw_7),
\]
where the direct sum is over $\mu$ given in Proposition~\ref{prop:B1s_A7_decomp}.
Then, we have $\Psi_J(b) = \Psi_{I_0}(b)$ for all $b \in \mathcal{B}$. Moreover, there exists an $I$-crystal isomorphism $\Psi \colon \mathcal{B} \to \mathcal{B}'$.
\end{thm}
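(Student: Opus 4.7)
The plan is to reduce the claim to a statement about $I_{0,2}$-highest weight elements and then exploit the multiplicity-freeness of the $A_6$ decomposition established in Proposition~\ref{prop:B1s_A6_decomp}. Since $I_{0,2} \subset I_0$ and $I_{0,2} \subset I_2$, both $\Psi_{I_0}$ and $\Psi_{I_2}$ restrict to $I_{0,2}$-crystal isomorphisms $\mathcal{B}|_{I_{0,2}} \to \mathcal{B}'|_{I_{0,2}}$, and any $I_{0,2}$-crystal morphism is uniquely determined by its action on the $I_{0,2}$-highest weight elements. Hence the main thing to establish is that $\Psi_{I_2}(b) = \Psi_{I_0}(b)$ for every $I_{0,2}$-highest weight element $b \in \mathcal{B}$.

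To do this, I would first observe that the $I_{0,2}$-decomposition of $\mathcal{B}'$ can be obtained in two different ways: either by restricting $\mathcal{B}'|_{I_0} \iso B(s\clfw_7)$ via Proposition~\ref{prop:B1s_A6_decomp}, or by restricting $\mathcal{B}'|_{I_2} \iso \bigoplus_\mu B(\mu)$ via the $A_7 \searrow A_6$ branching rule of Proposition~\ref{prop:A_branching}. The discussion preceding Proposition~\ref{prop:B1s_A7_decomp} together with Proposition~\ref{prop:B1s_A6_decomp} shows that these two produce the same $A_6$ decomposition, and that this decomposition is multiplicity free. In particular, the $I_{0,2}$-highest weight elements of $\mathcal{B}'$ are pairwise of distinct weights.

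Since $\Psi_{I_0}$ and $\Psi_{I_2}$ both preserve weights and both send $I_{0,2}$-highest weight elements to $I_{0,2}$-highest weight elements of the same weight, the multiplicity-freeness forces them to agree on every $I_{0,2}$-highest weight element of $\mathcal{B}$. Commuting with the $I_{0,2}$-crystal operators then propagates this agreement to the whole crystal, establishing $\Psi_{I_2}(b) = \Psi_{I_0}(b)$ for all $b \in \mathcal{B}$.

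For the final claim, let $\Psi$ denote this common set map. Being equal to $\Psi_{I_0}$ it commutes with $e_i, f_i$ for $i \in I_0$, and being equal to $\Psi_{I_2}$ it commutes with $e_i, f_i$ for $i \in I_2$. Since $I_0 \cup I_2 = I$, the map $\Psi$ is an $I$-crystal morphism, and it is a bijection because both $\Psi_{I_0}$ and $\Psi_{I_2}$ are. Thus $\Psi$ is the desired $I$-crystal isomorphism $\mathcal{B} \to \mathcal{B}'$. I expect the only subtle point to be carefully verifying that the two independent paths to the $A_6$ decomposition of $\mathcal{B}'$ yield literally the same decomposition, rather than decompositions that merely match summand-by-summand; this is precisely what the arguments in \cite[Thm.~3.15]{JS10} handle, and can be transcribed with $K = I_{0,2}$ using Proposition~\ref{prop:B1s_A6_decomp} in place of~\cite[Lemma~3.12]{JS10}.
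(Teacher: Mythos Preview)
Your proposal is correct and follows precisely the approach the paper indicates: reducing to $I_{0,2}$-highest weight elements via $K = I_{0,2}$ and invoking the multiplicity-freeness of Proposition~\ref{prop:B1s_A6_decomp} in place of~\cite[Lemma~3.12]{JS10}, exactly as in~\cite[Thm.~3.15]{JS10}. The paper does not spell out more detail than this, so your write-up is in fact more explicit than the original.
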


\begin{cor}
We have $\mathcal{B}^{7,s} \iso B^{7,s}$.
\end{cor}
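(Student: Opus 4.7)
The strategy is to invoke the preceding uniqueness theorem with $\mathcal{B} = \mathcal{B}^{7,s}$ and $\mathcal{B}' = B^{7,s}$, which reduces the task to checking that both crystals admit an $I_0$-crystal isomorphism with $B(s\clfw_7)$ and an $I_2$-crystal isomorphism with the direct sum $\bigoplus_{\mu} B(\mu)$ specified in Proposition~\ref{prop:B1s_A7_decomp}.

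For the Kirillov--Reshetikhin crystal $B^{7,s}$, the $I_0$-isomorphism is the classical one, available because node $7$ is minuscule and therefore $B^{7,s}$ is irreducible as a classical crystal and coincides with $B(s\clfw_7)$. The $I_2$-isomorphism is then precisely Proposition~\ref{prop:B1s_A7_decomp}, whose derivation has already used all the necessary input about $B^{7,s}$ (namely, that its weights are level zero and that $-\langle \alpha_0^{\vee}, \wt(T) \rangle = m_1 + m_2 + m_3 + m_{4'}$ on $I_{0,2}$-highest weight elements, which forces the dominant weights $\mu$ to be exactly as listed).

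For the combinatorial crystal $\mathcal{B}^{7,s}$, the $I_0$-structure agrees with $B(s\clfw_7)$ by definition, since the operators $e_i, f_i$ for $i \in I_0$ are inherited unchanged. The $I_2$-structure follows from the construction of $e_0, f_0$ via $\overline{\psi}^{-1} \circ e_7^A \circ \overline{\psi}$ (and similarly for $f_0$): the map $\overline{\psi}$ realizes the multiplicity-free $A_6$-decomposition of Proposition~\ref{prop:B1s_A6_decomp}, while Proposition~\ref{prop:A_branching} guarantees a unique horizontal-strip extension of each $A_6$-summand to an $A_7$-summand, and this unique extension matches the list in Proposition~\ref{prop:B1s_A7_decomp}.

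The main subtlety that merits care is verifying that $\mathcal{B}^{7,s}$ actually satisfies the abstract affine $E_7^{(1)}$-crystal axioms, in particular the relation $\varphi_0(b) = \varepsilon_0(b) + \langle \alpha_0^{\vee}, \wt(b) \rangle$ for the newly defined operators at node $0$, where the weight on the right is understood as the level-$0$ affine lift of the classical weight. Because the $A_7$-coroot at node $0$ agrees with the affine coroot $\alpha_0^{\vee}$, this axiom transports through $\overline{\psi}$ from the $A_7$-structure on the image, where it holds tautologically by construction. Once this compatibility is in place, the preceding uniqueness theorem gives the desired isomorphism $\mathcal{B}^{7,s} \iso B^{7,s}$.
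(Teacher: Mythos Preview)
Your proposal is correct and follows essentially the same approach as the paper: both invoke the preceding uniqueness theorem after verifying that $B^{7,s}$ and $\mathcal{B}^{7,s}$ share the required $I_0$- and $I_2$-crystal decompositions, with multiplicity freeness ensuring the isomorphisms are well-defined. Your additional paragraph on checking the abstract crystal axiom $\varphi_0(b) = \varepsilon_0(b) + \langle \alpha_0^{\vee}, \wt(b) \rangle$ for $\mathcal{B}^{7,s}$ is a nice point of care that the paper leaves implicit, but otherwise the arguments coincide.
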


\begin{proof}
We have that $B^{7,s} \iso B(s\clfw_7)$ as $I_0$-crystals since the corresponding KR module is irreducible as a $U_q(\g_0)$-module~\cite{Chari01}. Hence, the KR crystal $B^{7,s}$ exists by~\cite{KKMMNN91,KKMMNN92}. We can then decompose the crystal $B(s\clfw_7)$ into $I_{0,2}$-crystals according to Proposition~\ref{prop:B1s_A6_decomp}, and so the $I_2$-crystal decomposition of $B^{7,s}$ is given by Proposition~\ref{prop:B1s_A7_decomp}. Hence, we have isomorphisms between $\mathcal{B}^{7,s}$ and $B^{7,s}$ as $I_0$-crystals and $I_2$-crystals because these decompositions are multiplicity free.
\end{proof}

We have verified that $B^{7,s}$ is a perfect crystal of level $s$ for all $s \leq 4$ using the implementation of $B^{7,s}$ in \textsc{SageMath}~\cite{sage} done by the second author.

%%%%%%%%%%
\subsection{Combinatorial isomorphism}
\label{sec:jdt}

Let $\langle b \rangle$ denote the $I_{0,2}$-subcrystal generated by an element $b$. Consider the subcrystals
\begin{align*}
\mcC_1 & = \langle x_1 \rangle,
&
\mcC_2 & = \langle x_6 \rangle,
&
\mcC_3 & = \langle x_2 \otimes x_6 \rangle,
\\
\mcC_4 & = \langle x_5 \otimes x_7 \rangle,
&
\mcC_5 & = \langle x_3 \otimes x_5 \otimes x_7 \rangle,
&
\mcC_6 & = \langle x_7 \rangle.
\end{align*}
It is clear that we have $I_{0,2}$-crystal isomorphisms $\psi_i \colon B(\fwA_i) \to \mcC_i$ for all $i$. (Note that we could have $\mcC_7 = \langle x_4 \otimes x_7 \rangle$, which would correspond to $m_4$.) Let $\mcC_{5'} = \langle x_{4'} \rangle$, and we also have a $I_{0,2}$-crystal isomorphism $\psi_{5'} \colon B(\fwA_5) \to \mcC_{5'}$.

Therefore, we can construct a bijection between SSYT for $B(\mu)$ with the $A_6$ component
\[
\mcC_6^{\otimes a_6} \otimes \mcC_{5'}^{\otimes m_{4'}} \otimes \mcC_5^{\otimes a_5 - m_{4'}} \otimes \mcC_4^{\otimes a_4} \otimes \mcC_3^{\otimes a_3} \otimes \mcC_2^{\otimes a_2} \otimes \mcC_1^{\otimes a_1}
\]
by applying $\psi_i$ on each column of height $i$, with possibly $\psi_{5'}$ on columns of height $5$, of a SSYT (of type $A_6$). Note that the result is ``out-of-order'' in that it does not result is $x_{i_1} \otimes \cdots \otimes x_{i_{\ell}}$ with $i_1 \leq \cdots \leq i_{\ell}$ (\textit{i.e.}, a semistandard tableaux given by Proposition~\ref{prop:minuscule}). Thus it would remain to ``sort'' these elements to obtain an honest component of $B(s\clfw_7)$. However, we will instead do the reverse process: we refine the isomorphisms $\psi_i^{-1}$ to apply them to elements of $B(\clfw_7)$, which we combine to apply to an element of $B(s\clfw_7)$.

Using the tensor product rule, we obtain the following defining crystal isomorphism with a tensor product of $A_6$ columns:
\[
x_1 \mapsto \young(1)\,,
\; %\quad
x_2 \mapsto \young(3)\,,
\;
x_3 \mapsto \young(5)\,,
\;
x_{4} \mapsto \young(7)\,,
\;
x_{4'} \mapsto \young(1,2,3,4,5)\,,
\;
x_5 \otimes x_7 \mapsto \young(1,2,3,4)\,,
\;
x_6 \mapsto \young(1,2)\,,
\;
x_7 \mapsto \young(1,2,3,4,5,6)\,.
\]
We need one additional step to decouple $x_5 \otimes x_7$, which we do by
\[
x_5 \mapsto \young(1,2,3,4,7)\,.
\]
We could have arrived at this refined isomorphism by using the $A_6$ decomposition of $B(\clfw_7)$.
Now using our refined isomorphism and jeu-de-taquin, we obtain a crystal isomorphism $\Psi$ from elements of $B(s\clfw_7)$ to a direct sum of $A_6$ SSYT given by Proposition~\ref{prop:B1s_A6_decomp}. Recall that jeu-de-taquin is a crystal isomorphism.

Indeed, performing jeu-de-taquin (see, \textit{e.g.},~\cite{Fulton97}) on a highest weight element of the tensor product of single column SSYT, we obtain the SSYT
% m_2 <= m_6,   m_3 <= m_5,   m_4 + m_5 <= m_7,   min(m_4, m_{4'}) = 0
% m_1 = 7, m_2 = 2, m_3 = 6, m_4 = 1, m_{4'} = 3, m_5 = 6, m_6 = 5, m_7 = 8
\[
\begin{tikzpicture}[scale=0.5]
\draw[fill=magenta!30] (0,0) rectangle (7,-1); %  m_1
\draw[fill=blue!30] (7,0) rectangle (10,-1); %  m_{4'} 1's
\draw[fill=uqpurple!30] (10,0) rectangle (16,-1); %  m_5 1's
\draw[fill=darkred!40] (16,0) rectangle (21,-1); %  m_6 1's
\draw[fill=uqgold!30] (21,0) rectangle (29,-1); %  m_7 1's
\draw[fill=blue!30] (0,-1) rectangle (3,-2); %  m_{4'} 2's
\draw[fill=uqpurple!30] (3,-1) rectangle (9,-2); %  m_5 2's
\draw[fill=darkred!40] (9,-1) rectangle (14,-2); %  m_6 2's
\draw[fill=uqgold!30] (14,-1) rectangle (22,-2); %  m_7 2's
\draw[fill=dgreencolor!30] (0,-2) rectangle (2,-3); %  m_2 3's
\draw[fill=blue!30] (2,-2) rectangle (5,-3); %  m_{4'} 3's
\draw[fill=uqpurple!30] (5,-2) rectangle (11,-3); %  m_5 3's
\draw[fill=uqgold!30] (11,-2) rectangle (19,-3); %  m_7 3's
\draw[fill=blue!30] (0,-3) rectangle (3,-4); %  m_{4'} 4's
\draw[fill=uqpurple!30] (3,-3) rectangle (9,-4); %  m_5 4's
\draw[fill=uqgold!30] (9,-3) rectangle (17,-4); %  m_7 4's
\draw[fill=cyan!30] (0,-4) rectangle (6,-5); %  m_3 5's
\draw[fill=blue!30] (6,-4) rectangle (9,-5); %  m_{4'} 5's
\draw[fill=uqgold!30] (9,-4) rectangle (17,-5); %  m_7 5's
\draw[fill=uqgold!30] (0,-5) rectangle (8,-6); %  m_7 6's
\draw[fill=black!20] (0,-6) rectangle (1,-7); %  m_4 7's
\draw[fill=uqpurple!30] (1,-6) rectangle (7,-7); %  m_5 7's
\node at (3.5,-0.5) {$m_1$};
\node at (8.5,-0.5) {$m_{4'}$};
\node at (13,-0.5) {$m_5$};
\node at (18.5,-0.5) {$m_6$};
\node at (25,-0.5) {$m_7$};
\node at (1.5,-1.5) {$m_{4'}$};
\node at (6,-1.5) {$m_5$};
\node at (11.5,-1.5) {$m_6$};
\node at (18,-1.5) {$m_7$};
\node at (1,-2.5) {$m_2$};
\node at (3.5,-2.5) {$m_{4'}$};
\node at (8,-2.5) {$m_5$};
\node at (15,-2.5) {$m_7$};
\node at (1.5,-3.5) {$m_{4'}$};
\node at (6,-3.5) {$m_5$};
\node at (13,-3.5) {$m_7$};
\node at (3,-4.5) {$m_3$};
\node at (7.5,-4.5) {$m_{4'}$};
\node at (13,-4.5) {$m_7$};
\node at (4,-5.5) {$m_7$};
\node at (0.5,-6.5) {$m_4$};
\node at (4,-6.5) {$m_5$};
\end{tikzpicture}
\]
where row $i$ is filled with only $i$. Note that we have included an entry from both $m_4$ and $m_{4'}$ for illustrative purposes, but both cannot appear. Augmenting the result with a horizontal strip of $8$'s to obtain all columns of even height (which agrees with Proposition~\ref{prop:B1s_A6_decomp} after ignoring the columns of height $8$), we have
\[
\begin{tikzpicture}[scale=0.5]
\draw[fill=magenta!30] (0,0) rectangle (7,-1); %  m_1
\draw[fill=blue!30] (7,0) rectangle (10,-1); %  m_{4'} 1's
\draw[fill=uqpurple!30] (10,0) rectangle (16,-1); %  m_5 1's
\draw[fill=darkred!40] (16,0) rectangle (21,-1); %  m_6 1's
\draw[fill=uqgold!30] (21,0) rectangle (29,-1); %  m_7 1's
\draw[fill=blue!30] (0,-1) rectangle (3,-2); %  m_{4'} 2's
\draw[fill=uqpurple!30] (3,-1) rectangle (9,-2); %  m_5 2's
\draw[fill=darkred!40] (9,-1) rectangle (14,-2); %  m_6 2's
\draw[fill=uqgold!30] (14,-1) rectangle (22,-2); %  m_7 2's
\draw (22,-1) rectangle (29,-2); % augmented 8's
\draw[fill=dgreencolor!30] (0,-2) rectangle (2,-3); %  m_2 3's
\draw[fill=blue!30] (2,-2) rectangle (5,-3); %  m_{4'} 3's
\draw[fill=uqpurple!30] (5,-2) rectangle (11,-3); %  m_5 3's
\draw[fill=uqgold!30] (11,-2) rectangle (19,-3); %  m_7 3's
\draw[fill=blue!30] (0,-3) rectangle (3,-4); %  m_{4'} 4's
\draw[fill=uqpurple!30] (3,-3) rectangle (9,-4); %  m_5 4's
\draw[fill=uqgold!30] (9,-3) rectangle (17,-4); %  m_7 4's
\draw (17,-3) rectangle (19,-4); % augmented 8's
\draw[fill=cyan!30] (0,-4) rectangle (6,-5); %  m_3 5's
\draw[fill=blue!30] (6,-4) rectangle (9,-5); %  m_{4'} 5's
\draw[fill=uqgold!30] (9,-4) rectangle (17,-5); %  m_7 5's
\draw[fill=uqgold!30] (0,-5) rectangle (8,-6); %  m_7 6's
\draw (8,-5) rectangle (17,-6); % augmented 8's
\draw[fill=black!20] (0,-6) rectangle (1,-7); %  m_4 7's
\draw[fill=uqpurple!30] (1,-6) rectangle (7,-7); %  m_5 7's
\draw (0,-7) rectangle (7,-8); % augmented 8's
\node at (3.5,-0.5) {$m_1$};
\node at (8.5,-0.5) {$m_{4'}$};
\node at (13,-0.5) {$m_5$};
\node at (18.5,-0.5) {$m_6$};
\node at (25,-0.5) {$m_7$};
\node at (1.5,-1.5) {$m_{4'}$};
\node at (6,-1.5) {$m_5$};
\node at (11.5,-1.5) {$m_6$};
\node at (18,-1.5) {$m_7$};
\node at (25.5,-1.5) {$m_1$};
\node at (1,-2.5) {$m_2$};
\node at (3.5,-2.5) {$m_{4'}$};
\node at (8,-2.5) {$m_5$};
\node at (15,-2.5) {$m_7$};
\node at (1.5,-3.5) {$m_{4'}$};
\node at (6,-3.5) {$m_5$};
\node at (13,-3.5) {$m_7$};
\node at (18,-3.5) {$m_2$};
\node at (3,-4.5) {$m_3$};
\node at (7.5,-4.5) {$m_{4'}$};
\node at (13,-4.5) {$m_7$};
\node at (4,-5.5) {$m_7$};
\node at (12.5,-5.5) {$m_3+m_{4'}$};
\node at (0.5,-6.5) {$m_4$};
\node at (4,-6.5) {$m_5$};
\node at (3.5,-7.5) {$m_4+m_5$};
\end{tikzpicture}
\]
Note that the $8$'s in the even rows can overlap any of the blocks depending on $m_1$, $m_2$, and $m_3+m_{4'}$ compares with $m_{4'}+m_5+m_6+m_7$, $m_{4'}+m_5+m_7$, and $m_7$ respectively.

%%%%%%%%%%
\subsection{Diagram automorphism}

We can construct the type $E_6$ crystal decomposition as follows. Let $I_{0,7} := I_0 \setminus \{7\}$. We construct the $I_{0,7}$-highest weight elements by considering semistandard tableaux in $B(s\clfw_7)$ consisting of the elements of
\[
7 \xleftarrow{\hspace{15pt}} \bseven6 \xleftarrow{\hspace{15pt}} \bseven1 \xleftarrow{\hspace{15pt}} \bseven.
\]
The computation is similar to~\eqref{eq:comp_graph_B7}. Similarly, by reversing the arrows and adding loops at every vertex, we obtain the composition graph $G_7(\clfw_7)$. Thus, the $I_{0,7}$-highest weight elements in $B(s\clfw_7)$ are given by
\begin{equation}
\label{eq:07_hw_elts}
7^{\otimes m_7} \otimes \bseven6^{\otimes m_{\bseven6}} \otimes \bseven1^{\otimes m_{\bseven1}} \otimes \bseven^{\otimes m_{\bseven}}
\end{equation}
(recall that we naturally identify this with a semistandard tableau) with $m_7 + m_{\bseven6} + m_{\bseven1} + m_{\bseven} = s$.

\begin{prop}
\label{prop:E6_decomp}
Let $m_b$ be the number of occurrences of $b$ in a single row tableau $R \in B(s\clfw_7)$. Then $R$ is an $I_{0,7}$-highest weight element. Moreover, as $E_6$ crystals, we have
\[
B(s\clfw_7) \iso \bigoplus_{m_{\bseven6} + m_{\bseven1} + m_{\bseven} \leq s} B(m_{\bseven6} \clfw_6 + m_{\bseven1} \clfw_1).
\]
\end{prop}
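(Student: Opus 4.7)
The argument has two parts matching the two assertions. For the first, I would verify directly from the minuscule crystal graph of $B(\clfw_7)$ that $7, \bseven6, \bseven1, \bseven$ are precisely the four $I_{0,7}$-highest weight elements of $B(\clfw_7)$, one for each $E_6$-component of the $56$-dimensional representation. Combined with the signature rule and the composition graph $G_7(\clfw_7)$ described just above the proposition, any tensor product $7^{\otimes m_7} \otimes \bseven6^{\otimes m_{\bseven6}} \otimes \bseven1^{\otimes m_{\bseven1}} \otimes \bseven^{\otimes m_{\bseven}}$ is $I_{0,7}$-highest weight in $B(\clfw_7)^{\otimes s}$ and, because its entries respect the chain order on $B(\clfw_7)$, lies in $B(s\clfw_7)$ via Proposition~\ref{prop:minuscule} as a single-row tableau. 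This mirrors the derivation of Lemma~\ref{lemma:J0_highest_weight} from~\eqref{eq:comp_graph_B7}.

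For the $E_6$ decomposition, I would read the weights directly off the minuscule crystal: $\wt(7) = \clfw_7$, $\wt(\bseven6) = \clfw_6 - \clfw_7$, $\wt(\bseven1) = \clfw_1 - \clfw_7$, and $\wt(\bseven) = -\clfw_7$. Since $\clfw_7$ pairs trivially with $\alpha_i^{\vee}$ for all $i \in I_{0,7}$, the $\clfw_7$-contribution vanishes upon restriction to the $E_6$ weight lattice, so the $I_{0,7}$-highest weight element $R = 7^{\otimes m_7} \otimes \bseven6^{\otimes m_{\bseven6}} \otimes \bseven1^{\otimes m_{\bseven1}} \otimes \bseven^{\otimes m_{\bseven}}$ has $E_6$-projected weight $m_{\bseven6} \clfw_6 + m_{\bseven1} \clfw_1$, which is $E_6$-dominant, and therefore generates an $E_6$-subcrystal isomorphic to $B(m_{\bseven6} \clfw_6 + m_{\bseven1} \clfw_1)$. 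Parameterizing these elements by the triples $(m_{\bseven6}, m_{\bseven1}, m_{\bseven})$ with $m_{\bseven6} + m_{\bseven1} + m_{\bseven} \leq s$ (and $m_7 := s - m_{\bseven6} - m_{\bseven1} - m_{\bseven}$) then yields the direct sum in the statement.

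The main potential obstacle is ensuring that $G_7(\clfw_7)$ really is the reduced chain claimed, namely that no additional $I_{0,7}$-highest weight tensor products arise from other pairs of elements of $B(\clfw_7)$. This amounts to a direct signature-rule check on the $56$-element crystal, entirely analogous to the computation behind~\eqref{eq:comp_graph_B7}. Once settled, the rest of the proof is a simple multiplicity count: $B(a_6 \clfw_6 + a_1 \clfw_1)$ appears with multiplicity $s - a_6 - a_1 + 1$ (when nonnegative), obtained by summing over the free parameter $m_{\bseven}$, which matches the cardinality of the indexing set in the statement.
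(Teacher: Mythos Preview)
Your proposal is correct and follows essentially the same approach as the paper. The paper does not give a separate proof of this proposition; the argument is the discussion immediately preceding it, which identifies the four $I_{0,7}$-highest weight elements of $B(\clfw_7)$, notes that the computation of $G_7(\clfw_7)$ is analogous to~\eqref{eq:comp_graph_B7}, and reads off the highest weight elements and their weights---exactly what you outline in more detail.
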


Note that the decomposition into $E_6$ crystals from Proposition~\ref{prop:E6_decomp} is not multiplicity free; indeed, the multiplicity of $B(m_{\bseven6} \clfw_6 + m_{\bseven1} \clfw_1)$ is $s - m_{\bseven6} - m_{\bseven1}$ (the number of distinct values $m_{\bseven}$ can take). However, we can distinguish each of the $I_{0,7}$-highest weight elements by using the extra information (in addition to the weight) of $\inner{\wt(b)}{\alpha^{\vee}_7} = m_7 - m_{\bseven1} - m_{\bseven6} - m_{\bseven}$. Furthermore, by the level-zero condition, we have
$
\inner{\wt(b)}{\alpha^{\vee}_0} =
%-(2m_{\bseven6} + 2 m_{\bseven1} + (m_7 - m_{\bseven} - m_{\bseven6} - m_{\bseven1}))
m_{\bseven} - m_7 - m_{\bseven6} - m_{\bseven1}.
$
By also using $m_7 + m_{\bseven6} + m_{\bseven1} + m_{\bseven} = s$, we thus obtain the following.

\begin{prop}
\label{prob:twisted_isomorphism}
The map $\Phi \colon B^{7,s} \to B^{7,s}$ given by
\[
\Phi\bigl( 7^{\otimes a} \otimes \bseven6^{\otimes b} \otimes \bseven1^{\otimes c} \otimes \bseven^{\otimes d} \bigr) = 7^{\otimes d} \otimes \bseven6^{\otimes c} \otimes \bseven1^{\otimes b} \otimes \bseven^{\otimes a}
\]
and extended as a twisted $I_{0,7}$-crystal morphism is a twisted $I_{0,7}$-crystal isomorphism. Moreover $\Phi$ is an twisted crystal involution that is induced from the order $2$ diagram automorphism of $E_7^{(1)}$.
\end{prop}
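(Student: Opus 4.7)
The plan is to verify the statement first on $I_{0,7}$-highest weight elements and then extend to all of $B^{7,s}$ via the twisted morphism property. Let $\tau$ denote the order-two diagram automorphism of $E_7^{(1)}$, which swaps $0 \leftrightarrow 7$, $1 \leftrightarrow 6$, $3 \leftrightarrow 5$ and fixes $2, 4$. Since deleting node $7$ from the $E_7$ Dynkin diagram yields type $E_6$, the restriction $\tau|_{I_{0,7}}$ is the nontrivial diagram automorphism of $E_6$.

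First, I would verify that $\Phi$ sends each $I_{0,7}$-highest weight element $T = 7^{\otimes a} \otimes \bseven6^{\otimes b} \otimes \bseven1^{\otimes c} \otimes \bseven^{\otimes d}$ to an $I_{0,7}$-highest weight element whose weight equals $\tau(\wt T)$. This reduces to computing $\wt(7)$, $\wt(\bseven6)$, $\wt(\bseven1)$, $\wt(\bseven)$ using the composition graph $7 \leftarrow \bseven6 \leftarrow \bseven1 \leftarrow \bseven$, and then checking that the exponent transformation in the formula is consistent with $\tau$ swapping the two $E_6$-components of type $27$ and $\overline{27}$ together with the swap of the two singlet components. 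The auxiliary invariant $\inner{\alpha_7^{\vee}}{\wt(T)} = m_7 - m_{\bseven6} - m_{\bseven1} - m_{\bseven}$ noted before the statement should interchange with $\inner{\alpha_0^{\vee}}{\wt(T)}$ under $\Phi$, consistent with $\tau$ swapping $0$ and $7$.

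Next, I would extend $\Phi$ to all of $B^{7,s}$ via $\Phi(f_i b) = f_{\tau(i)} \Phi(b)$ for $i \in I_{0,7}$. Each $I_{0,7}$-component of $B^{7,s}$ is an $E_6$ highest weight crystal $B(\mu)$, and the $E_6$ diagram automorphism gives a unique twisted isomorphism $B(\mu) \to B(\tau \mu)$. Although Proposition~\ref{prop:E6_decomp} shows the $E_6$ decomposition is not multiplicity free, the invariant $\inner{\alpha_7^{\vee}}{\wt(\cdot)}$ separates components, so the extension is well-defined. The involution property then follows by composing the exponent transformation $(a, b, c, d) \mapsto (b+c+d, c, b, a+b+c)$ with itself on highest weight elements and checking that we return to $(a, b, c, d)$, combined with $\tau^2 = \id$ at the morphism level.

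For the final assertion that $\Phi$ is induced from $\tau$, I would extend $\Phi$ to a twisted $I$-crystal automorphism by proving $\Phi \circ e_0 = e_7 \circ \Phi$ and $\Phi \circ e_7 = e_0 \circ \Phi$; I expect this to be the main obstacle. My plan is to verify these equivariances on $I_{0,7}$-highest weights using formulas for $\varepsilon_j, \varphi_j$ at $j \in \{0, 7\}$ derivable from the composition graph $G_7(\clfw_7)$, and then invoke the uniqueness argument from the preceding subsection to conclude that the extended map agrees with the diagram automorphism action on $B^{7,s}$.
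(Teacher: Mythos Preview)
Your overall plan---check the map on $I_{0,7}$-highest weight elements via the $E_6$ weight together with the separating invariant $\langle\alpha_7^{\vee},\wt(\cdot)\rangle$, then extend as a twisted $E_6$-crystal morphism---is precisely the reasoning the paper sketches in the paragraphs surrounding the proposition, so the architecture is right.

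There is, however, a genuine gap at the involution step. You assert that composing the exponent map $(a,b,c,d)\mapsto(b+c+d,\,c,\,b,\,a+b+c)$ with itself returns $(a,b,c,d)$, but if you actually perform the composition you obtain $(a+2b+2c,\,b,\,c,\,2b+2c+d)$, which is wrong unless $b=c=0$. Worse, the image tuple already fails to lie in $B^{7,s}$ at all: its entries sum to $a+3b+3c+d\neq s$. So the displayed formula cannot be correct as written, and you would have discovered this had you carried out the check rather than announced it. Running your own weight argument honestly---$b\leftrightarrow c$ from the $E_6$ automorphism, plus $\langle\alpha_7^{\vee},\wt(\Phi T)\rangle=\langle\alpha_0^{\vee},\wt(T)\rangle=-a-b-c+d$ together with the length constraint $a'+b'+c'+d'=s$---forces $(a,b,c,d)\mapsto(d,c,b,a)$, which does square to the identity and matches the $s=1$ case $7\leftrightarrow\bseven$, $\bseven6\leftrightarrow\bseven1$. (Note this also shows that the paper's stated value of $\langle\alpha_0^{\vee},\wt(T)\rangle$ just before the proposition is off by a sign on the $m_7$ and $m_{\bseven}$ terms.)

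A smaller remark on the final step: proving $\Phi e_0=e_7\Phi$ directly and then appealing to uniqueness is more than the paper does. The paper instead observes, just after the proposition, that on the $A_7$ subcrystal one has $\Phi=\psi^{-1}\circ\sigma\circ\psi$ with $\sigma$ the $A_7$ diagram automorphism; since $\tau$ restricts to $\sigma$ on $I_2$ and to the $E_6$ automorphism on $I_{0,7}$, compatibility with both restrictions already pins $\Phi$ down as the map induced by $\tau$, without a separate analysis of the $0$- and $7$-arrows.
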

% d' - a' - b' - c' = a - b - c - d  % La0' <-> La7
% b' = c  % La1' <-> La6
% c' = b  % La6' <-> La1
% a' - b' - c' - d' = d - a - b - c  % La7' <-> La0
% a' + b' + c' + d' = a + b + c + d  %  = s
% 2a' = 2d
By restricting to $I_{0,2}$-highest weight elements, we have that $\Phi = \psi^{-1} \circ \sigma \circ \psi$, where $\psi$ is the isomorphism from Proposition~\ref{prop:B1s_A7_decomp}. This can be seen by equating the weight and $\langle \wt(b), \alpha_7^{\vee} \rangle$.
Thus, $\Phi$ when restricted to the $A_7$ crystal of $B^{7,s}$ is the order $2$ diagram automorphism of~$A_7$.

As a consequence, we have that $B^{7,s}$ satisfies~\cite[Assumption~1]{FSS.2007}, and so we have the following by~\cite[Thm.~4.7]{FSS.2007}. (We refer the reader to~\cite{FSS.2007} for the precise definitions.)

\begin{cor}
The tensor product $B = (B^{7,s})^{\otimes m} \otimes \{u_{s\omega_0}\}$ is isomorphic to a Demazure subcrystal $B_w(s\omega_{\tau(0)})$, where $t_{\mu} = w \tau$ in the extended affine Weyl group with $\mu = -m \omega_7$.
Moreover, $B$ and $(B^{7,s})^{\otimes m}$ are connected.
\end{cor}

%%%%%%%%%%
\subsection{Combinatorial \texorpdfstring{$R$}{R}-matrix}

We give an explicit description of the combinatorial $R$-matrix $R \colon B^{7,s} \otimes B^{7,s'} \to B^{7,s'} \otimes B^{7,s}$ by noting the classical decomposition of $B^{7,s} \otimes B^{7,s'}$ is multiplicity free as $E_7$ crystals.
We may assume $s \leq s'$ without loss of generality as the combinatorial $R$-matrix is an involution.
Thus it is sufficient to consider the $I_{0,7}$-highest weight elements of $B^{7,s}$, which is given by~\eqref{eq:07_hw_elts}.
Therefore, the $I_0$-highest weight elements have weight
\[
(m_{\bseven} - m_7 - m_{\bseven6} - m_{\bseven1} - s') \omega_0 + m_{\bseven1} \omega_1 + m_{\bseven6} \omega_6 + (s' + m_7 - m_{\bseven6} - m_{\bseven1} - m_{\bseven}) \omega_7.
\]
The fact that the decomposition into $E_7$ crystals is multiplicity free is exactly the same reasoning as in the proof of Proposition~\ref{prob:twisted_isomorphism}.
Since the combinatorial $R$-matrix must map classical components to classical components, we have the following.

\begin{thm}
For $s \leq s'$, the combinatorial $R$-matrix $R \colon B^{7,s} \otimes B^{7,s'} \mapsto B^{7,s'} \otimes B^{7,s}$ is defined by
\[
(7^{\otimes m_7} \otimes \bseven6^{\otimes m_{\bseven6}} \otimes \bseven1^{\otimes m_{\bseven1}} \otimes \bseven^{\otimes m_{\bseven}}) \otimes 7^{\otimes s'}
\mapsto
(7^{\otimes (s' - s) + m_7} \otimes \bseven6^{\otimes m_{\bseven6}} \otimes \bseven1^{\otimes m_{\bseven1}} \otimes \bseven^{\otimes m_{\bseven}}) \otimes 7^{\otimes s}
\]
and extended as an $I_0$-crystal isomorphism.
\end{thm}

\section{Conjectures for \texorpdfstring{$B^{1,s}$}{B1s}}
\label{sec:conjectures}

We conclude with a conjectural decomposition of $B^{1,s}$ into $A_7$ crystals. Recall that $A_7$ has a natural diagram symmetry $\sigma$ that respects the $E_7^{(1)}$ diagram symmetry. Thus, proving this conjecture using $E_7$ crystal decomposition of $B^{1,s} \iso \bigoplus_{k=0}^s B(k\clfw_1)$ could possibly lead to a proof of~\cite[Conj.~3.26]{JS10}.

\begin{conj}
\label{conj:A7_decomp_B1s}
Let $a,b,c,d \in \ZZ_{\geq 0}$ such that $a + 2b + 3c + d \leq s$. Then we have
\[
B^{1,s} \iso \bigoplus B(a(\fwA_1+\fwA_7) + b(\fwA_2+\fwA_6) + c(\fwA_3+\fwA_5) + d\fwA_4)^{\oplus m_{a,b,c,d}},
\]
as $A_7$ crystals, where the multiplicities are $m_{a,b,c,d} = m_{d,s-a-2b-3c}$, where
\[
m_{d,s'} = \sum_{i=M}^{d+1} \left\lceil \frac{i}{2} \right\rceil
\]
with $M =\max(d+1-(s'-d), 0)$.
\end{conj}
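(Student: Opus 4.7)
The plan is to mirror the strategy of Section~\ref{sec:results}, using the $A_6$ Levi $I_{0,2}$ as an intermediary between the $E_7$- and $A_7$-structures on $B^{1,s}$. We would first invoke the stated $E_7$-decomposition $B^{1,s}\iso\bigoplus_{k=0}^s B(k\clfw_1)$, reducing the problem to describing the $A_7$-decomposition of each classical summand $B(k\clfw_1)$ and summing over $k$. The reduction $m_{a,b,c,d}=m_{d,s-a-2b-3c}$ in the formula already suggests that the multiplicity depends only on $d$ and the slack $s-a-2b-3c$; this is consistent with $a+2b+3c+d\leq s$ cutting out which $k$ can contribute a given summand.

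For each $k$, the next step is to enumerate the $I_{0,2}$-highest weight elements of $B(k\clfw_1)$. Unlike $\clfw_7$, the weight $\clfw_1$ is not minuscule --- $B(\clfw_1)$ is the $133$-dimensional adjoint representation --- so Proposition~\ref{prop:minuscule} does not apply. The plan is therefore to build a composition graph $G_1(\clfw_1)$ analogous to~\eqref{eq:comp_graph_B7}, identify the $I_{0,2}$-highest weight ``building blocks'' of $B(\clfw_1)$ together with the comparability relations coming from the signature rule, and extract inequality constraints on the multiplicities of these blocks in a semistandard tensor product (the analog of Lemma~\ref{lemma:J0_highest_weight}). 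Once the $I_{0,2}$-highest weight elements of $B(k\clfw_1)$ are parametrized, Proposition~\ref{prop:A_branching} lifts each uniquely to an $I_2$-highest weight element by augmenting with a horizontal strip of ``$8$''s, giving the $A_7$-decomposition.

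The main obstacle is that, unlike Proposition~\ref{prop:B1s_A6_decomp}, the $A_6$-decomposition of $B(k\clfw_1)$ will generally \emph{not} be multiplicity free. The intricate formula $m_{d,s'}=\sum_{i=M}^{d+1}\lceil i/2\rceil$ with $M=\max(2d+1-s',0)$ should encode exactly this non-triviality: the ceiling and the piecewise lower bound $M$ are symptoms of an interval-counting problem split into parity cases, which we expect to arise from counting the admissible values of the block multiplicities $m_i$ in the non-minuscule analog of Lemma~\ref{lemma:J0_highest_weight}, after summing over $k\in\{0,\dots,s\}$. Deriving the closed form from the enumeration will require a careful case analysis substantially more involved than the proof of Proposition~\ref{prop:B1s_A6_decomp}.

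A complementary tool is the $E_7^{(1)}$ diagram automorphism $\sigma$ (which swaps $0\leftrightarrow 7$, $1\leftrightarrow 6$, $3\leftrightarrow 5$, and fixes $2,4$). Its restriction to the $A_7$-Levi $I_2$ is the order-two diagram automorphism $\fwA_i\leftrightarrow\fwA_{8-i}$, and this explains why every summand in Conjecture~\ref{conj:A7_decomp_B1s} has the displayed $\sigma$-symmetric form. Verifying small cases of $s$ via the \textsc{SageMath} implementation (as already done for $B^{7,s}$) would both confirm the multiplicity formula and help isolate the combinatorial identity behind the ceiling sum; the $\sigma$-equivariance should then reduce the remaining work to checking a $\sigma$-fixed skeleton of the $A_7$-decomposition, which, as noted in the paper, would also yield~\cite[Conj.~3.26]{JS10}.
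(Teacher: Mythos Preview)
This statement is labeled a \emph{conjecture} in the paper and is not proved there. The paper's treatment consists of: (i)~computational verification for $s \leq 9$ via the \textsc{SageMath} branching-rule code, (ii)~the suggestion that one should enumerate $A_7$-highest weight elements by first computing the $I_{0,2}$-highest weight elements from the composition graph $G_2(\clfw_1)$ (Figures~\ref{fig:I2_comp_graph} and~\ref{fig:compact_I2}) and then restricting to the set invariant under the diagram automorphism, and (iii)~an auxiliary proposition characterizing $B(2\clfw_1)\subseteq B(\clfw_1)^{\otimes 2}$. There is no proof in the paper to compare against.

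Your proposal is not a proof either, but a research plan, and it outlines essentially the same strategy the paper sketches. The pieces you enumerate---the $A_6$-Levi composition graph for $B(\clfw_1)$, the lift to $A_7$ via horizontal strips of $8$'s (Proposition~\ref{prop:A_branching}), and the $\sigma$-symmetry constraint on the summands---match the paragraph following the \textsc{SageMath} output. (A small notational slip: the relevant composition graph is $G_2(\clfw_1)$, not $G_1(\clfw_1)$, since $G_k$ parametrizes $(I_0\setminus\{k\})$-highest weight elements; and note that the paper has already computed $G_2(\clfw_1)$ in the appendix.) You correctly identify the central obstruction, namely that the $A_6$-decomposition of $B(k\clfw_1)$ is no longer multiplicity free for the adjoint crystal, so the uniqueness argument of Proposition~\ref{prop:B1s_A6_decomp} does not carry over. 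This is precisely why the statement remains a conjecture: your plan does not supply the missing ingredient---a combinatorial bijection or identity accounting for the multiplicities $m_{d,s'}$---and neither does the paper.
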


We compute the multiplicity of $\fwA_4$ in $B^{1,s}$ using \textsc{SageMath}~\cite{sage}:

\begin{lstlisting}
sage: for s in range(10):
....:     [sum(ceil(i/2) for i in range(max(0,2*d+1-s),d+1+1))
....:      for d in range(s+1)]
[1]
[1, 1]
[1, 2, 2]
[1, 2, 3, 2]
[1, 2, 4, 4, 3]
[1, 2, 4, 5, 5, 3]
[1, 2, 4, 6, 7, 6, 4]
[1, 2, 4, 6, 8, 8, 7, 4]
[1, 2, 4, 6, 9, 10, 10, 8, 5]
[1, 2, 4, 6, 9, 11, 12, 11, 9, 5]
\end{lstlisting}

We compute the decomposition of $B^{7,s}$ into $A_7$ crystals using \textsc{SageMath}:

\begin{lstlisting}
sage: def compute_branching(s):
....:     A7 = WeylCharacterRing(['A',7], style="coroots")
....:     E7 = WeylCharacterRing(['E',7], style="coroots")
....:     La = E7.fundamental_weights()
....:     chi = sum(E7(k*La[1]) for k in range(s+1))
....:     return chi.branch(A7, rule="extended")
sage: compute_branching(1)
A7(0,0,0,0,0,0,0) + A7(0,0,0,1,0,0,0) + A7(1,0,0,0,0,0,1)
sage: compute_branching(2)
2*A7(0,0,0,0,0,0,0) + 2*A7(0,0,0,1,0,0,0) + A7(0,0,0,2,0,0,0)
 + A7(0,1,0,0,0,1,0) + A7(1,0,0,0,0,0,1) + A7(1,0,0,1,0,0,1)
 + A7(2,0,0,0,0,0,2)
sage: compute_branching(3)
2*A7(0,0,0,0,0,0,0) + 3*A7(0,0,0,1,0,0,0) + 2*A7(0,0,0,2,0,0,0)
 + A7(0,0,1,0,1,0,0) + A7(0,1,0,0,0,1,0) + 2*A7(1,0,0,0,0,0,1)
 + 2*A7(1,0,0,1,0,0,1) + A7(0,1,0,1,0,1,0) + A7(0,0,0,3,0,0,0)
 + A7(1,0,0,2,0,0,1) + A7(1,1,0,0,0,1,1) + A7(2,0,0,0,0,0,2)
 + A7(2,0,0,1,0,0,2) + A7(3,0,0,0,0,0,3)
\end{lstlisting}

We have verified Conjecture~\ref{conj:A7_decomp_B1s} for $s \leq 9$ by using a heavily optimized version of the branching rule code in \textsc{SageMath}~\cite{sage}.

One way to construct the $A_7$ highest weight elements would be to use the $A_6$ highest weight elements, which we can compute from the composition graph in Figure~\ref{fig:I2_comp_graph} (equivalently Figure~\ref{fig:compact_I2}). From there, we will want the elements invariant (as a set) under the diagram automorphism from~\cite{JS10} as highest weight elements of weight $\eta$ must map to a highest weight element of weight $-w_0 \eta$ under the $A_7$ diagram automorphism $\sigma$.

As a step towards proving Conjecture~\ref{conj:A7_decomp_B1s}, we show the analog of~\cite[Prop.~2.13]{JS10} that characterizes the elements in $B(2\clfw_1) \subseteq B(\clfw_1)^{\otimes 2}$.

\begin{prop}
We have
\[
(b_1 \otimes c_1) \otimes (b_2 \otimes c_2) \in B(2\clfw_1) \subseteq B(\clfw_1)^{\otimes 2} \subseteq B(\clfw_7)^{\otimes 4}
\]
if and only if
  $b_1 \leq b_2$ and $c_1 \leq c_2$ (the comparisons are in $B(\clfw_7)$) and
  $(b_1 \otimes c_1) < (b_2 \otimes c_2)$ (the comparison is in $B(\clfw_1)$).
\end{prop}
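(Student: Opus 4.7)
The plan is to adapt the strategy of~\cite[Prop.~2.13]{JS10}, which characterizes the subcrystal $B(\clfw_1) \subseteq B(\clfw_7)^{\otimes 2}$ in an analogous manner one level lower, and to propagate that characterization up the tensor product $B(\clfw_1)^{\otimes 2} \subseteq B(\clfw_7)^{\otimes 4}$. The overall strategy is the standard one for identifying a highest weight subcrystal: verify the conditions at the unique $I_0$-highest weight element $u_{2\clfw_1} = u_{\clfw_1} \otimes u_{\clfw_1}$, show the conditions are preserved under every $f_i$ with $i \in I_0$, and then argue the conditions cut out exactly one connected $I_0$-subcrystal containing $u_{2\clfw_1}$.

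For necessity, I would begin by writing $u_{\clfw_1} = u_{\clfw_7} \otimes v$ for the explicit element $v$ coming from~\cite[Prop.~2.13]{JS10}, so that $u_{2\clfw_1}$ becomes $(u_{\clfw_7} \otimes v) \otimes (u_{\clfw_7} \otimes v) \in B(\clfw_7)^{\otimes 4}$; here $b_1 = b_2 = u_{\clfw_7}$ and $c_1 = c_2 = v$, so the three conditions hold trivially (with the comparison in $B(\clfw_1)$ interpreted in the reflexive sense at the top). I would then run through the four cases of the tensor product signature rule applied to $B(\clfw_7)^{\otimes 4}$: $f_i$ modifies exactly one of $b_1, c_1, b_2, c_2$, determined by the rightmost unmatched minus in the reduced $i$-signature. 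In each case I would verify that (a) the modified pair $b_j \otimes c_j$ remains in $B(\clfw_1)$, by re-invoking~\cite[Prop.~2.13]{JS10}; (b) the componentwise inequalities $b_1 \leq b_2$ and $c_1 \leq c_2$ in $B(\clfw_7)$ are preserved, where the positional preferences of the signature rule do the work; and (c) the strict comparison in $B(\clfw_1)$ persists, since $f_i$ moves down the crystal poset of $B(\clfw_1)$ consistently.

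For sufficiency, I would argue the dual: the set of quadruples satisfying the three conditions is also closed under $e_i$ (by the same case analysis run backwards) and any element satisfying the conditions can be lifted by a sequence of $e_i$'s to $u_{2\clfw_1}$ while staying inside the set. Since $B(2\clfw_1)$ is the unique connected $I_0$-subcrystal of $B(\clfw_7)^{\otimes 4}$ containing $u_{2\clfw_1}$, the set is precisely $B(2\clfw_1)$. A cleaner alternative would be a character/counting argument: verify that the cardinality of the set of quadruples satisfying the conditions matches $\dim B(2\clfw_1)$, using that necessity already embeds $B(2\clfw_1)$ inside the set.

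The main obstacle will be the four-way signature case analysis, specifically the bookkeeping that simultaneously maintains the pairwise $B(\clfw_7)$ comparisons and the joint $B(\clfw_1)$ comparison. This requires fine control over how the crystal poset on $B(\clfw_1)$ relates to the factorwise order on $B(\clfw_7) \times B(\clfw_7)$; the three conditions together encode a strict refinement of the naive product order, and the signature rule interacts with this refinement in a way that must be tracked explicitly case-by-case using the output of~\cite[Prop.~2.13]{JS10}.
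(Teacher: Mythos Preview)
Your approach is genuinely different from the paper's. The paper gives no conceptual argument at all: its proof is the single sentence ``This is a finite computation'' followed by \textsc{SageMath} code that exhaustively enumerates $B(2\clfw_1)\subseteq B(\clfw_1)^{\otimes 2}$ (with $\lvert B(\clfw_1)\rvert=133$) and checks the three conditions against the crystal posets directly. So the paper treats the proposition purely as a finite verification, not as something to be proved structurally.

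Your strategy---seed at $u_{2\clfw_1}$, propagate under $f_i$, then close up by a dual $e_i$ argument or a cardinality count---is the natural conceptual route, and if it goes through it would be more informative than the paper's brute-force check. But two points deserve emphasis. First, the ``$<$'' in the third condition is genuinely strict in the statement (and the paper's code confirms this is what is being tested in the sufficiency direction), so your parenthetical about reading it reflexively at the top is not a harmless convention fix: the highest weight element $u_{\clfw_1}\otimes u_{\clfw_1}$ does not satisfy the stated strict inequality, so the forward implication cannot hold as written, and indeed the paper's code only verifies $b_1\le b_2$ and $c_1\le c_2$ in the forward direction. You should flag this as a defect of the statement rather than paper over it. Second, and more substantively, $B(\clfw_1)$ is the $133$-dimensional adjoint crystal, which is quasi-minuscule but not minuscule; its poset has $i$-strings of length $2$ and seven incomparable zero-weight elements. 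The step ``the positional preferences of the signature rule do the work'' is exactly where the argument lives, and in the non-minuscule setting it is not automatic that when $f_i$ hits the left $B(\clfw_1)$-factor the relation $f_i(b_1\otimes c_1)\le (b_2\otimes c_2)$ survives. Carrying this out by hand for type $E_7$ is feasible but is a real case analysis, not a formality; absent those details, your proposal is a plausible outline rather than a proof.
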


\begin{proof}
This is a finite computation that can be done, \textit{e.g.}, by using the following \textsc{SageMath}~\cite{sage} code:
\begin{lstlisting}
sage: L = crystals.Letters(['E',7])
sage: x = L.highest_weight_vector().f_string([7,6,5,4,2,3,4,5,6,7])
sage: A = tensor([x,L.highest_weight_vector()]).subcrystal()
sage: S = tensor([A.module_generators[0].value,
....:             A.module_generators[0].value]).subcrystal()
sage: all(P.le(x.value[0][1], x.value[1][1]) for x in S) # Check the b_i condition
True
sage: all(P.le(x.value[0][0], x.value[1][0]) for x in S) # Check the c_i condition
True
sage: P = Poset(L.digraph())
sage: PA = Poset(A.digraph())
sage: data = [[x.value[0], x.value[1]] for x in S]
sage: T = tensor([A, A])
sage: all((not PA.le(A(x[0].value), A(x[1].value))) or x[0].value == x[1].value
....:     for x in T if P.le(x[0].value[0], x[1].value[0])
....:               and P.le(x[0].value[1], x[1].value[1])
....:               and [x[0].value, x[1].value] not in data)
True
\end{lstlisting}
%sage: for x in T:  # Check that these properties characterize
%....:     if P.le(x[0].value[0], x[1].value[0]) and P.le(x[0].value[1], x[1].value[1]):
%....:         if [x[0].value, x[1].value] not in data:
%....:             x
%....:             (not PA.le(A(x[0].value), A(x[1].value))) or x[0].value == x[1].value
\end{proof}

\appendix
%=====================================================================
\section{\textsc{SageMath} code for composition graphs}

To compute the composition graph in Figure~\ref{fig:I2_comp_graph}, we first do some setup:

\begin{lstlisting}
sage: L = crystals.Letters(['E',7])
sage: x = L.highest_weight_vector().f_string([7,6,5,4,2,3,4,5,6,7])
sage: A = tensor([x,L.highest_weight_vector()]).subcrystal()
sage: TA = tensor([A.module_generators[0], A.module_generators[0]]).subcrystal()
sage: _ = A.list()
\end{lstlisting}

To compute the composition graph $G_2(\clfw_1)$, we run the following functions:

\begin{lstlisting}
def check_le(x, y):
    return tensor([x,y]) in TA

def composition_graph(J):
    I = A.index_set()
    ImJ = sorted(set(I) - set(J))
    G = DiGraph([[b for b in A if b.is_highest_weight(ImJ)], check_le])
    num_verts = 0
    while num_verts != G.num_verts():
        num_verts = G.num_verts()
        verts = set(G.vertices())
        for b in A:
            ep = set([i for i in I if b.epsilon(i) > 0])
            Jplus = set(list(J) + [i for i in ImJ
                                   if any(bp.phi(i) > 0 and check_le(b,bp)
                                          for bp in verts)
                                  ]
                       )
            if ep.issubset(Jplus):
                G.add_vertex(b)
                for bp in verts:
                    if check_le(b, bp):
                        G.add_edge(b, bp)
    loops = G.loops()
    G = G.transitive_reduction()
    for l in loops:
        G.add_edge(*l)
    G.set_latex_options(format='dot2tex')
    return G
\end{lstlisting}

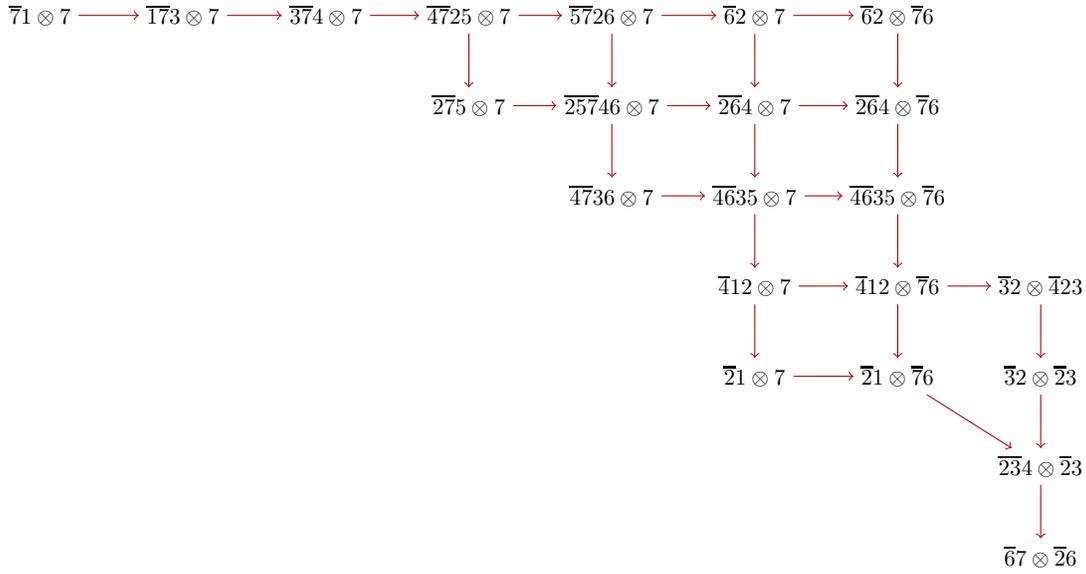
\begin{figure}
\[
\begin{tikzpicture}[xscale=1.9,yscale=1.2,every node/.style={scale=0.85}]
\node (b71t7) at (0,0) {$\bseven1 \otimes 7$};
\node (b1b73t7) at (1,0) {$\bon\bseven3 \otimes 7$};
\node (b3b74t7) at (2,0) {$\bth\bseven4 \otimes 7$};
\node (b4b725t7) at (3,0) {$\bfo\bseven25 \otimes 7$};
\node (b5b726t7) at (4,0) {$\bfive\bseven26 \otimes 7$};
\node (b62t7) at (5,0) {$\bsix2 \otimes 7$};
\node (b62tb76) at (6,0) {$\bsix2 \otimes \bseven6$};
\node (b2b75t7) at (3,-1) {$\btw\bseven5 \otimes 7$};
\node (b2b5b746t7) at (4,-1) {$\btw\bfive\bseven46 \otimes 7$};
\node (b2b64t7) at (5,-1) {$\btw\bsix4 \otimes 7$};
\node (b2b64tb76) at (6,-1) {$\btw\bsix4 \otimes \bseven6$};
\node (b4b736t7) at (4,-2) {$\bfo\bseven36 \otimes 7$};
\node (b4b635t7) at (5,-2) {$\bfo\bsix35 \otimes 7$};
\node (b4b635tb76) at (6,-2) {$\bfo\bsix35 \otimes \bseven6$};
\node (b412t7) at (5,-3) {$\bfo12 \otimes 7$};
\node (b412tb76) at (6,-3) {$\bfo12 \otimes \bseven6$};
\node (b32tb423) at (7,-3) {$\bth2 \otimes \bfo23$};
\node (b21t7) at (5,-4) {$\btw1 \otimes 7$};
\node (b21tb76) at (6,-4) {$\btw1 \otimes \bseven6$};
\node (b32tb23) at (7,-4) {$\bth2 \otimes \btw3$};
\node (b2b34tb23) at (7,-5) {$\btw\bth4 \otimes \btw3$};
\node (b67tb26) at (7,-6) {$\bsix7 \otimes \btw6$};
\draw[->,darkred] (b71t7) -- (b1b73t7);
\draw[->,darkred] (b1b73t7) -- (b3b74t7);
\draw[->,darkred] (b3b74t7) -- (b4b725t7);
\draw[->,darkred] (b4b725t7) -- (b5b726t7);
\draw[->,darkred] (b5b726t7) -- (b62t7);
\draw[->,darkred] (b62t7) -- (b62tb76);
\draw[->,darkred] (b4b725t7) -- (b2b75t7);
\draw[->,darkred] (b5b726t7) -- (b2b5b746t7);
\draw[->,darkred] (b62t7) -- (b2b64t7);
\draw[->,darkred] (b62tb76) -- (b2b64tb76);
\draw[->,darkred] (b2b75t7) -- (b2b5b746t7);
\draw[->,darkred] (b2b5b746t7) -- (b2b64t7);
\draw[->,darkred] (b2b64t7) -- (b2b64tb76);
\draw[->,darkred] (b2b5b746t7) -- (b4b736t7);
\draw[->,darkred] (b2b64t7) -- (b4b635t7);
\draw[->,darkred] (b2b64tb76) -- (b4b635tb76);
\draw[->,darkred] (b4b736t7) -- (b4b635t7);
\draw[->,darkred] (b4b635t7) -- (b4b635tb76);
\draw[->,darkred] (b4b635t7) -- (b412t7);
\draw[->,darkred] (b4b635tb76) -- (b412tb76);
\draw[->,darkred] (b412t7) -- (b412tb76);
\draw[->,darkred] (b412tb76) -- (b32tb423);
\draw[->,darkred] (b412t7) -- (b21t7);
\draw[->,darkred] (b412tb76) -- (b21tb76);
\draw[->,darkred] (b32tb423) -- (b32tb23);
\draw[->,darkred] (b32tb23) -- (b2b34tb23);
\draw[->,darkred] (b21t7) -- (b21tb76);
\draw[->,darkred] (b21tb76) -- (b2b34tb23);
\draw[->,darkred] (b2b34tb23) -- (b67tb26);
\end{tikzpicture}
\]
\caption{The composition graph $G_2(\clfw_1)$ to compute the $(I_0 \setminus \{2\})$-highest weight elements. We have suppressed the loops that occur at every node except $\bth2 \otimes \btw3$.}
\label{fig:I2_comp_graph}
\end{figure}

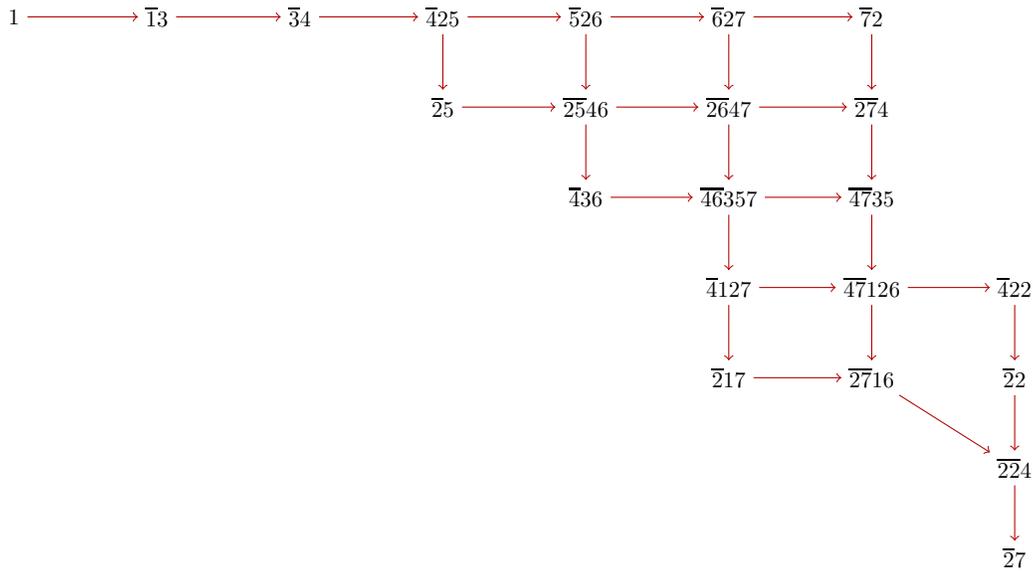
\begin{figure}
\[
\begin{tikzpicture}[xscale=1.9,yscale=1.2,every node/.style={scale=0.85}]
\node (b71t7) at (0,0) {$1$};
\node (b1b73t7) at (1,0) {$\bon3$};
\node (b3b74t7) at (2,0) {$\bth4$};
\node (b4b725t7) at (3,0) {$\bfo25$};
\node (b5b726t7) at (4,0) {$\bfive26$};
\node (b62t7) at (5,0) {$\bsix27$};
\node (b62tb76) at (6,0) {$\bseven2$};
\node (b2b75t7) at (3,-1) {$\btw5$};
\node (b2b5b746t7) at (4,-1) {$\btw\bfive46$};
\node (b2b64t7) at (5,-1) {$\btw\bsix47$};
\node (b2b64tb76) at (6,-1) {$\btw\bseven4$};
\node (b4b736t7) at (4,-2) {$\bfo36$};
\node (b4b635t7) at (5,-2) {$\bfo\bsix357$};
\node (b4b635tb76) at (6,-2) {$\bfo\bseven35$};
\node (b412t7) at (5,-3) {$\bfo127$};
\node (b412tb76) at (6,-3) {$\bfo\bseven126$};
\node (b32tb423) at (7,-3) {$\bfo22$};
\node (b21t7) at (5,-4) {$\btw17$};
\node (b21tb76) at (6,-4) {$\btw\bseven16$};
\node (b32tb23) at (7,-4) {$\btw2$};
\node (b2b34tb23) at (7,-5) {$\btw\btw4$};
\node (b67tb26) at (7,-6) {$\btw7$};
\draw[->,darkred] (b71t7) -- (b1b73t7);
\draw[->,darkred] (b1b73t7) -- (b3b74t7);
\draw[->,darkred] (b3b74t7) -- (b4b725t7);
\draw[->,darkred] (b4b725t7) -- (b5b726t7);
\draw[->,darkred] (b5b726t7) -- (b62t7);
\draw[->,darkred] (b62t7) -- (b62tb76);
\draw[->,darkred] (b4b725t7) -- (b2b75t7);
\draw[->,darkred] (b5b726t7) -- (b2b5b746t7);
\draw[->,darkred] (b62t7) -- (b2b64t7);
\draw[->,darkred] (b62tb76) -- (b2b64tb76);
\draw[->,darkred] (b2b75t7) -- (b2b5b746t7);
\draw[->,darkred] (b2b5b746t7) -- (b2b64t7);
\draw[->,darkred] (b2b64t7) -- (b2b64tb76);
\draw[->,darkred] (b2b5b746t7) -- (b4b736t7);
\draw[->,darkred] (b2b64t7) -- (b4b635t7);
\draw[->,darkred] (b2b64tb76) -- (b4b635tb76);
\draw[->,darkred] (b4b736t7) -- (b4b635t7);
\draw[->,darkred] (b4b635t7) -- (b4b635tb76);
\draw[->,darkred] (b4b635t7) -- (b412t7);
\draw[->,darkred] (b4b635tb76) -- (b412tb76);
\draw[->,darkred] (b412t7) -- (b412tb76);
\draw[->,darkred] (b412tb76) -- (b32tb423);
\draw[->,darkred] (b412t7) -- (b21t7);
\draw[->,darkred] (b412tb76) -- (b21tb76);
\draw[->,darkred] (b32tb423) -- (b32tb23);
\draw[->,darkred] (b32tb23) -- (b2b34tb23);
\draw[->,darkred] (b21t7) -- (b21tb76);
\draw[->,darkred] (b21tb76) -- (b2b34tb23);
\draw[->,darkred] (b2b34tb23) -- (b67tb26);
\end{tikzpicture}
\]
\caption{The composition graphs of Figure~\ref{fig:I2_comp_graph} with every node written in ``compact form,'' where a $k$ adds $1$ to $\varphi_k(b)$ and $\overline{k}$ adds $1$ to $\varepsilon_k(b)$. Recall that the only vertex that does not have a loop is $\btw2$.}
\label{fig:compact_I2}
\end{figure}

\bibliographystyle{plain}
\bibliography{kr_crystals}{}
\end{document}